\journal{ }
\numberwithin{equation}{section}
\def\gl{\mathfrak{gl}}
\def\fa{\mathfrak{a}}
\def\fb{\mathfrak{b}}
\def\fg{\mathfrak{g}}
\def\fh{\mathfrak{h}}
\def\fri{\mathfrak{i}}
\def\fm{\mathfrak{m}}
\def\fD{\mathfrak{D}}
\def\cK{\mathcal{K}}
\def\cU{\mathcal{U}}
\def\cI{\mathcal{I}}
\def\cT{\mathcal{T}}
\def\bC{\mathbb{C}}
\def\bN{\mathbb{N}}
\def\bZ{\mathbb{Z}}
\def\supp{\mathrm{supp}}
\newcommand\ol{\overline}
\newcommand\wt{\widetilde}
\newtheorem{theo}{{Theorem}}[section]
\newtheorem{lemm}[theo]{Lemma}
\newtheorem{rema}[theo]{Remark}
\newtheorem{prop}[theo]{Proposition}
\begin{document}

\begin{frontmatter}



\title{Simple Harish-Chandra modules over the super affine-Virasoro algebras}


\author[1]{Yan He}
\ead{heyan913533012@163.com}

\author[2]{Dong Liu}
\ead{liudong@zjhu.edu.cn}

\author[3]{Yan Wang\corref{cor1}}
\ead{wangyan09@tju.edu.cn}

\address[1]{Department of Mathematics, Changshu Institute of Technology, Suzhou P. R. China}
\address[2]{Department of Mathematics, Huzhou University, Zhejiang P. R. China}
\address[3]{School of Mathematics, Tianjin University, Tianjin, P. R. China}

\cortext[cor1]{Corresponding author}

\begin{abstract}
In this paper, we classify all simple Harish-Chandra modules over the super affine-Virasoro algebra $\widehat{\mathcal{L}}=\mathcal{W}\ltimes(\fg\otimes \mathcal{A})\oplus \mathbb{C}C$, where $\mathcal{A}=\bC[t^{\pm 1}]\otimes\Lambda(1)$ is the tensor superalgebra of the Laurent polynomial algebra in even variable $t$ and the Grassmann algebra in odd variable $\xi$, $\mathcal{W}$ is the Lie superalgebra of superderivations of $\mathcal{A}$, and $\fg$ is a finite-dimensional perfect Lie superalgebra.
\end{abstract}

\begin{keyword}
super affine-Virasoro algebra, Witt superalgebra, weight module, cuspidal module
\MSC[2000] 17B10, 17B20, 17B65, 17B66, 17B68
\end{keyword}

\end{frontmatter}

\section{Introduction}
Throughout this paper, we denote by $\bZ, \bZ_+, \bN$ and $\bC$ the sets of all integers, non-negative integers,
positive integers and complex numbers, respectively. All vector spaces and algebras in this paper are over $\bC$.
A super vector space $V$ is a vector space endowed with a $\mathbb{Z}_2$-gradation $V=V_{\bar{0}}\oplus V_{\bar{1}}$.
The parity of a homogeneous element $v\in V_{\bar{i}}$ is denoted by $|v|=\bar{i}\in \mathbb{Z}_2$. When we write $|v|$
for an element $v\in V$, we will always assume that $v$ is a homogeneous element. We denote by $U(L)$ the universal
enveloping algebra of the Lie (super)algebra $L$. Also, we denote by $\delta_{i,j}$ the Kronecker delta.

Let $\mathcal{A}=\bC[t^{\pm 1}]\otimes\Lambda(1)$ be the tensor superalgebra of the Laurent polynomial algebra in even variable $t$ and the Grassmann algebra in odd variable $\xi$,  and $\fg=\fg_{\bar{0}}\oplus \fg_{\bar{1}}$ be a finite-dimensional perfect Lie superalgebra (i.e. $\fg=[\fg, \fg]$). Then $\fg \otimes \mathcal{A}$ is a Lie superalgebra, which is called the super loop algebra (see \cite{KT}, also named current Lie superlagebra in \cite{NY}), with $[x\otimes a, y\otimes b]=(-1)^{|a||y|}[x, y]\otimes ab$ for all $x, y\in \fg, a, b\in \mathcal{A}$. The universal central extensions of $\fg\otimes \mathcal{A}$ equipped with a nondegenerated homogeneous invariant supersymmetric bilinear form on $\fg$ were studied in \cite{NY, NY1} recently.

Let $\mathcal{W}$ be the Witt superalgebra, i.e. the Lie superalgebra of superderivations of $\mathcal{A}$. Clearly $\fg \otimes \mathcal{A}$ becomes a $\mathcal{W}$-module by the usual actions of $\mathcal{W}$ on $\mathcal{A}$. So we can define a Lie
superalgebra $\mathcal{L}$ associated to $\fg$ as $\mathcal{L}=\mathcal{W}\ltimes(\fg\otimes \mathcal{A})$.
 By calculating the second cohomology group $\mbox{H}^2(\mathcal{L}, \bC)$ of $\mathcal{L}$,
we obtain the universal central extension $\widehat{\mathcal{L}}=\mathcal{W}\ltimes(\fg\otimes \mathcal{A})\oplus \mathbb{C}C$, which is called the super affine-Virasoro algebra, also named  super conformal current algebra in \cite{KT}. It
can be viewed as a super version of the affine-Virasoro algebra defined in \cite{K} (see also \cite{LPX1}) and it corresponds to a superconformal and chiral invariant 2-dimensional quantum field theory (see \cite{KT}).

The affine-Virasoro algebra is the semi-direct sum of the Virasoro algebra and the untwisted affine Lie algebra. There have been many researches on the representation theory of the affine-Virasoro algebras, see references \cite{CY, EJ, GHL, LPX1}, and so on. However, the research concerning the representation theory of the super affine-Virasoro algebra is still seldom. All unitary irreducible representations for the subalgebra $\widehat{\frak g}_R=R\ltimes(\dot\fg\otimes \mathcal{A})\oplus \mathbb{C}C$ of $\widehat{\mathcal{L}}$ were constructed in \cite{KT}, where $\dot\fg$ is a semisimple Lie algebra and $R$ is the centerless $N=1$ superconformal algebra, a subalgebra of $\mathcal{W}$.

Based on the classification of simple jet modules introduced by Y.Billig in \cite{B} (see also \cite{E}), a complete classification of simple Harish-Chandra modules over
Lie algebra of vector fields on a torus was given in \cite{BF} by  the so-called $\mathcal{A}$-cover theory. As we know, the classification of cuspidal modules is one of
the most important steps in the classification of simple Harish-Chandra modules over various Lie (super)algebras.
Using the $\mathcal{A}$-cover theory, the classifications of simple Harish-Chandra modules over the Witt superalgebra (\cite{XL2})(also see \cite{BFI}), the $N=1$ superconformal algebra(\cite{CL, CLL}), the map (super)algebra related to the Virasoro algebra (\cite{CLW}) were given. Certainly such researches for the $N=1, 2$ superconformal algebras, the affine-Virasoro algebra were first given in \cite{LPX1, LPX2, S} by other methods, respectively. Motivated by the above researches, we classify the simple Harish-Chandra modules over the super affine-Virasoro algebra $\widehat{\mathcal{L}}$ in this paper.

The paper is organized as follows. In Section 2, we give some definitions and preliminaries. In Section 3, we study the central extension of $\mathcal{L}$ and get the super affine-Virasoro algebra $\widehat{\mathcal{L}}$. The $\mathcal{A}\mathcal{L}$-modules are studied in Section 4. In Section 5, using the $\mathcal{A}$-cover theory and the results of $\mathcal{A}\mathcal{L}$-modules, we give the classification of
simple cuspidal modules of $\mathcal{L}$. Finally, we prove our main theorem in Section 6, see Theorem \ref{Theorem 3}.

\section{Preliminaries}
In this section, we recall some necessary definitions and preliminary results.

\subsection{The super affine-Virasoro algebra}

Let $\mathcal{A}=\bC[t^{\pm 1}]\otimes\Lambda(1)$ be the tensor superalgebra of the Laurent polynomial algebra in even variable $t$ and the Grassmann algebra in odd variable $\xi$,
and $\mathcal W$ be the Witt superalgebra. Denote by $d_i=t^{i+1}\frac{\partial}{\partial t}$, $h_i=t^{i}\xi\frac{\partial}{\partial\xi}$, $Q_i=t^{i}\frac{\partial}{\partial\xi}$, $G_i=t^{i+1}\xi\frac{\partial}{\partial t}$ for any $i\in \mathbb{Z}$, and $\Delta={\rm span}_\bC\{t\frac{\partial}{\partial t}, \frac{\partial}{\partial\xi}\}$. Then $\mathcal{W}={\mathcal A}\Delta={\rm span}_\bC \{d_i, h_i, Q_i, G_i\mid i\in \mathbb{Z}\}$ with brackets given by
\begin{align*}
&[d_i,d_j]=(j-i)d_{i+j},\;[d_i,h_j]=jh_{i+j}, \;[d_i,Q_j]=jQ_{i+j}, \;[d_i,G_j]=(j-i)G_{i+j},\\
&[h_i,Q_j]=-Q_{i+j}, \;[h_i,G_j]=G_{i+j},\;[Q_i,G_j]=d_{i+j}+ih_{i+j}.
\end{align*}
Obviously, $\mathfrak W={\rm span}_\bC\{d_i\mid i\in \mathbb{Z}\}$ is the Witt algebra. It is well known that $\mathcal{W}$ is isomorphic to the $N=2$ (centerless) Ramond algebra (see \cite{Kv, LPX2}).

Let $\fg=\fg_{\bar{0}}\oplus \fg_{\bar{1}}$ be a finite-dimensional perfect Lie superalgebra (i.e. $\fg=[\fg, \fg]$).
Then $\fg \otimes \mathcal{A}$ becomes a $\mathcal{W}$-module (resp. $\mathfrak{W}$-module) by the usual actions of $\mathcal{W}$ (resp. $\mathfrak{W}$) on $\mathcal{A}$.
So we can define a Lie superalgebra $\mathcal{L}$ (resp. $\mathfrak{L}$) associated to $\fg$ as $\mathcal{L}=\mathcal{W}\ltimes(\fg\otimes \mathcal{A})$ (resp. $\mathfrak{L}=\mathfrak{W}\ltimes(\fg\otimes \mathcal{A})$). It is easy to see that $\mathfrak{L}$ is a super subalgebra of $\mathcal{L}$. In addition to the brackets on $\mathcal{W}$, the rest of the nonzero brackets in $\mathcal{L}$ are as follows:
\begin{align*}
&[d_i,x\otimes t^j\xi]=jx\otimes t^{i+j}\xi,\ [d_i,x\otimes t^j]=jx\otimes t^{i+j},\\
&[h_i,x\otimes t^j\xi]=x\otimes t^{i+j}\xi,\ [Q_i,x\otimes t^j\xi]=(-1)^{|x|}x\otimes t^{i+j},\\
&[G_i,x\otimes t^j]=(-1)^{|x|}jx\otimes t^{i+j}\xi,\ [x\otimes t^i, y\otimes t^j]=[x,y]\otimes t^{i+j},\\
&[x\otimes t^i\xi, y\otimes t^j]=(-1)^{|y|}[x,y]\otimes t^{i+j}\xi,
\end{align*}
where $i,j\in\bZ$ and $x, y\in\fg$.

The universal central extension $\widehat{\mathcal L}$ (resp.  $\widehat{\mathfrak L}$) of $\mathcal L$  (resp.  $\mathfrak L$) is called a super affine-Virasoro algebra (or super conformal current algebra in \cite{KT}).

Denoted by $\cK$ the associative superalgebra generated by $\mathcal{A}$ and $\Delta$, which is called the super Weyl algebra. For any $\lambda\in\bC$, let $\sigma_\lambda$ be the automorphism of $\cK$ with $\sigma_\lambda(d_i)=d_i+\lambda, \sigma_\lambda(\frac{\partial}{\partial \xi})=\frac{\partial}{\partial \xi}, \sigma_\lambda|_\mathcal{A}=\mbox{id}_\mathcal{A}$. Denote $\mathcal{A}(\lambda):=\mathcal{A}^{\sigma_\lambda}$. It is clear that $\mathcal{A}(\lambda)\cong \cK/\cI_\lambda$, where $\cI_\lambda$ is the left ideal of $\cK$ generated by $d_i-\lambda$ and $\frac{\partial}{\partial \xi}$. We need the following lemmas.
\begin{lemm}\label{XL2-2}(\cite{XL2}, Lemma 3.5)
1. $\mathcal{A}(\lambda)$ is a strictly simple $\cK$-module.

2. Any simple weight $\cK$-module is isomorphic to some $\mathcal{A}(\lambda)$ for some $\lambda\in \bC$ up to a parity-change.
\end{lemm}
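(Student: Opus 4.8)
The plan is to prove Lemma~\ref{XL2-2} by the standard method for classifying simple modules over Weyl-type algebras: realize $\mathcal{A}(\lambda)$ concretely, check it has no invariant subspaces (strict simplicity), and then show any simple weight $\cK$-module must arise this way by localizing at a weight vector. Since the statement is quoted from \cite{XL2}, I would at most sketch the argument, but here is how it goes.

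\textbf{Step 1: Concrete model and the weight decomposition.} First I would describe $\mathcal{A}(\lambda) = \cK/\cI_\lambda$ explicitly. As a vector space it is spanned by the images of $t^m$ and $t^m\xi$ for $m\in\bZ$, because in the quotient $\frac{\partial}{\partial\xi}$ acts as $0$ and $t\frac{\partial}{\partial t}$ acts as the scalar $\lambda$ plus the degree — more precisely, $t\frac{\partial}{\partial t}\cdot t^m = (m+\lambda)t^m$ and similarly on $t^m\xi$. Thus $\mathcal{A}(\lambda)$ is a $\bZ$-graded (weight) $\cK$-module with one even and one odd vector in each weight space of weight $m+\lambda$. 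The actions of the generating set $\{t^{\pm1},\xi,t\frac{\partial}{\partial t},\frac{\partial}{\partial\xi}\}$ are then read off directly: multiplication by $t^{\pm1},\xi$ shifts degree, $t\frac{\partial}{\partial t}$ is the (shifted) degree operator, and $\frac{\partial}{\partial\xi}$ kills $t^m$ and sends $t^m\xi\mapsto t^m$.

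\textbf{Step 2: Strict simplicity.} To show $\mathcal{A}(\lambda)$ is a strictly simple $\cK$-module — meaning it remains simple even when one forgets the $\bZ_2$-grading and it admits no nonzero proper submodule of the ungraded module — I would take any nonzero element $v$, write it as a finite sum of the basis vectors $t^m, t^m\xi$, and use the operators $\frac{\partial}{\partial\xi}$ (to strip off the $\xi$-part), $\xi$ (to recover it), and the degree operator $t\frac{\partial}{\partial t}$ together with multiplication by powers of $t$ to isolate a single homogeneous basis vector and then reach every other basis vector. The key point making this work for \emph{all} $\lambda$ is that the degree operator separates the weight spaces and the $\xi,\frac{\partial}{\partial\xi}$ pair toggles the two vectors within each weight space; no eigenvalue coincidence can obstruct this. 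That gives part~1.

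\textbf{Step 3: Every simple weight module is some $\mathcal{A}(\lambda)$.} Let $M$ be a simple weight $\cK$-module. Since $M$ is a weight module, the commuting even operators $t\frac{\partial}{\partial t}$ acts semisimply; pick a weight vector $v$ of weight $\lambda$ (so $t\frac{\partial}{\partial t}\cdot v = \lambda v$). The operators $t,t^{-1}$ shift the weight by $\pm1$, so all weights lie in $\lambda+\bZ$ and are one-dimensional-or-two-dimensional per weight. I would then either directly build a nonzero $\cK$-homomorphism $\mathcal{A}(\lambda)\to M$ by sending the generator to $v$ (well-defined because $\frac{\partial}{\partial\xi}$ either kills $v$ or we first replace $v$ by $\frac{\partial}{\partial\xi}v$, and $d_0-\lambda$ kills $v$), and then invoke simplicity of $M$ together with part~1 to conclude the map is an isomorphism up to parity; alternatively one argues via the ideals $\cI_\mu$ directly. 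The parity-change ambiguity enters precisely because the generating weight vector $v$ could be even or odd.

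\textbf{Main obstacle.} The genuinely delicate point is Step~3: showing that the cyclic submodule generated by a single weight vector $v$ is \emph{all} of $M$ and is \emph{faithfully} a copy of $\mathcal{A}(\lambda)$ rather than a proper quotient — i.e. that the annihilator of $v$ in $\cK$ is exactly $\cI_\lambda$ (up to parity) and not something larger. This requires ruling out "degenerate" weight modules where, say, $t$ or $t^{-1}$ acts as $0$ on some weight space; one handles this by noting $tt^{-1}=t^{-1}t=1$ in $\cK$, so $t$ and $t^{-1}$ act invertibly, forcing all weight spaces to have the same dimension and the module to be either the one-line-per-weight type (which doesn't occur as a genuine $\cK$-module since $\frac{\partial}{\partial\xi}$ and $\xi$ must act compatibly) or exactly the two-dimensional-per-weight model $\mathcal{A}(\lambda)$. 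Everything else is routine bookkeeping with the explicit basis.
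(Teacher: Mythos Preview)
The paper does not supply its own proof of this lemma; it is quoted verbatim from \cite{XL2}, Lemma~3.5, and used as a black box. Your sketch is correct and is essentially the standard argument one would give for it.

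One comment on your Step~3 and the ``main obstacle'': you are making this harder than it is. Once you have chosen a nonzero weight vector $v$ of weight $\lambda$ with $\frac{\partial}{\partial\xi}v=0$ (such a $v$ exists because $\big(\frac{\partial}{\partial\xi}\big)^2=0$, so replace $v$ by $\frac{\partial}{\partial\xi}v$ if necessary), the map $\mathcal{A}(\lambda)=\cK/\cI_\lambda\to M$, $1\mapsto v$, is a well-defined nonzero $\cK$-homomorphism. Part~1 already tells you $\mathcal{A}(\lambda)$ is simple, so the map is injective; simplicity of $M$ makes it surjective. That is the whole argument --- there is no need to separately rule out ``degenerate'' weight modules or to analyze weight-space dimensions, because the annihilator of $v$ contains $\cI_\lambda$ by construction and cannot be strictly larger without contradicting the simplicity of $\cK/\cI_\lambda$. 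The parity-change ambiguity arises exactly as you say, from whether the chosen $v$ is even or odd.
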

\begin{lemm}\label{L1} For the Lie superalgebra $\mathcal L$, we have the following relations:
\begin{align*}
&[(t-1)^kd_i,(t-1)^l d_j]=(l-k+j-i)(t-1)^{k+l}d_{i+j}+(l-k)(t-1)^{k+l-1}d_{i+j},\\
&[(t-1)^kd_i,(t-1)^l h_j]=(l+j)(t-1)^{k+l}h_{i+j}+l(t-1)^{k+l-1}h_{i+j},\\
&[(t-1)^kd_i,(t-1)^l Q_j]=(l+j)(t-1)^{k+l}Q_{i+j}+l(t-1)^{k+l-1}Q_{i+j},\\
&[(t-1)^kd_i,(t-1)^l G_j]=(l-k+j-i)(t-1)^{k+l}G_{i+j}+(l-k)(t-1)^{k+l-1}G_{i+j},\\
&[(t-1)^kh_i,(t-1)^l Q_j]=-(t-1)^{k+l}Q_{i+j},\\
&[(t-1)^kh_i,(t-1)^l G_j]=(t-1)^{k+l}G_{i+j},\\
&[(t-1)^kQ_i,(t-1)^l G_j]=(t-1)^{k+l}d_{i+j}+(i+k)(t-1)^{k+l}h_{i+j}+k(t-1)^{k+l-1}h_{i+j},\\
&[(t-1)^kh_i,(t-1)^l h_j]=[(t-1)^kQ_i,(t-1)^l Q_j]=[(t-1)^kG_i,(t-1)^l G_j]=0,\\
&[(t-1)^kd_i,x\otimes(t-1)^l t^j]=jx\otimes(t-1)^{k+l}t^{i+j}+lx\otimes(t-1)^{k+l-1}t^{i+j+1},\\
&[(t-1)^kd_i,x\otimes(t-1)^l t^j\xi]=lx\otimes(t-1)^{k+l-1}t^{i+j+1}\xi,\\
&[(t-1)^kh_i,x\otimes(t-1)^l t^j\xi]=x\otimes(t-1)^{k+l}t^{i+j}\xi,\\
&[(t-1)^kQ_i,x\otimes(t-1)^l t^j\xi]=(-1)^{|x|}x\otimes(t-1)^{k+l}t^{i+j},\\
&[(t-1)^kG_i,x\otimes(t-1)^l t^j]=(-1)^{|x|}(jx\otimes(t-1)^{k+l}t^{i+j}\xi+lx\otimes(t-1)^{k+l-1}t^{i+j+1}\xi),\\
&[(t-1)^kh_i,x\otimes(t-1)^l t^j]=[(t-1)^kQ_i,x\otimes(t-1)^lt^j]=0=[(t-1)^kG_i,x\otimes(t-1)^l t^j\xi]
\end{align*} for all $k,l\in\bZ_+$ and $x\in\fg$.
\end{lemm}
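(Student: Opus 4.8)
The plan is to verify Lemma \ref{L1} by direct computation, treating each of the listed relations as an instance of the defining brackets of $\mathcal{L}$ re-expressed in terms of the filtered basis $\{(t-1)^k d_i, (t-1)^k h_i, (t-1)^k Q_i, (t-1)^k G_i\}$ and $\{x\otimes(t-1)^k t^j, x\otimes(t-1)^k t^j\xi\}$. The key observation is that $(t-1)^k d_i = (t-1)^k t^{i+1}\partial_t$, and since all these elements still lie inside $\mathcal{W}$ (resp. $\fg\otimes\mathcal{A}$), the brackets are already determined; the only work is to recognize that the Leibniz rule for $\partial_t$ acting on a product $(t-1)^l \cdot (\text{something})$ produces exactly the two-term right-hand sides, where the ``extra'' term with exponent $k+l-1$ comes from differentiating the factor $(t-1)^l$ and using $\partial_t(t-1)^l = l(t-1)^{l-1}$.

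Concretely, first I would compute the prototype bracket $[(t-1)^k d_i, (t-1)^l d_j]$. Writing $d_i = t^{i+1}\partial_t$, we have $(t-1)^k d_i = (t-1)^k t^{i+1}\partial_t$, and then
\begin{align*}
[(t-1)^k t^{i+1}\partial_t,\ (t-1)^l t^{j+1}\partial_t]
&= (t-1)^k t^{i+1}\,\partial_t\!\big((t-1)^l t^{j+1}\big)\,\partial_t\\
&\quad -(t-1)^l t^{j+1}\,\partial_t\!\big((t-1)^k t^{i+1}\big)\,\partial_t.
\end{align*}
Expanding $\partial_t((t-1)^l t^{j+1}) = l(t-1)^{l-1}t^{j+1} + (j+1)(t-1)^l t^j$ and symmetrically for the other term, then collecting the coefficient of $(t-1)^{k+l}t^{i+j+1}\partial_t = (t-1)^{k+l}d_{i+j}$ and of $(t-1)^{k+l-1}d_{i+j}$, yields the stated $(l-k+j-i)$ and $(l-k)$ coefficients after the $t^j$ versus $t^{j+1}$ bookkeeping is done carefully. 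The remaining Virasoro-type and mixed relations follow the same pattern: each generator $h_i, Q_i, G_i$ is of the form $(\text{monomial in } t,\xi)\cdot(t\partial_t \text{ or }\partial_\xi)$, so one applies the graded Leibniz rule to the appropriate $\mathcal{A}$-factor, tracks the sign $(-1)^{|x|}$ when an odd operator passes an element of $\fg$ or $\xi$, and notes that $\partial_\xi$ kills $(t-1)^l$ (hence the $Q$- and $G$-brackets involving $\partial_\xi$ produce no $(t-1)^{k+l-1}$ correction, matching the single-term right-hand sides). The vanishing relations $[(t-1)^k h_i,(t-1)^l h_j] = 0$ etc. hold because $h_i h_j$, $Q_i Q_j$, $G_i G_j$ already bracket to zero in $\mathcal{W}$ and multiplication by powers of $(t-1)$ — which lies in $\mathcal{A}$, a subalgebra on which $\Delta$ acts — does not disturb this; similarly $[(t-1)^k h_i, x\otimes(t-1)^l t^j] = 0$ because $h_i$ acts on $\mathcal{A}$ only through the $\xi$-component, which is absent in $t^j$.

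There is no serious obstacle here — the lemma is a bookkeeping identity — but the one place demanding care is the interaction between the two sources of ``lowering'' terms in the $d$-brackets: the factor $(t-1)^l$ can be differentiated (giving the $l(t-1)^{l-1}$ piece) while simultaneously the index-shifting $t^j$ has its own derivative, and one must not conflate $x\otimes(t-1)^{k+l-1}t^{i+j+1}$ (which appears, since $\partial_t$ on $(t-1)^l$ leaves $t^{j+1}$, not $t^j$, after multiplying by $d_i = t^{i+1}\partial_t$) with $x\otimes(t-1)^{k+l-1}t^{i+j}$. The precise exponent on $t$ in every correction term should be double-checked against the formula; once that is pinned down, each identity is a two-line verification, and I would simply record the computation for $[(t-1)^k d_i,(t-1)^l d_j]$ and $[(t-1)^k Q_i,(t-1)^l G_j]$ in full as representative cases, leaving the rest to the reader as entirely analogous.
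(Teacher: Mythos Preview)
Your proposal is correct and takes essentially the same approach as the paper: the paper's proof is the single sentence ``The results follow from direct computations,'' and what you have written is simply a fleshed-out version of exactly that computation, using the Leibniz rule for $\partial_t$ on products involving $(t-1)^l$ and the trick $t=(t-1)+1$ to express everything in the filtered basis. There is nothing to add methodologically; your caution about the $t$-exponent in the correction terms is well placed and is the only spot where a careless expansion could go wrong.
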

\begin{proof}
The results follow from direct computations.
\end{proof}

\subsection{The central extension of Lie superalgebras}
A central extension $\widehat{L}$ of the Lie superalgebra $L$ is a short exact sequence of the Lie superalgebras
$$0\rightarrow\mathfrak{c}\xrightarrow[]{i}\widehat{L}\xrightarrow[]{\tau}L\rightarrow 0,$$
where $\mathfrak{c}$ is a commutative Lie algebra over $\bC$, i.e. $\mathfrak{c}_{\bar{1}}=0$ and $[\mathfrak{c}, \widehat{L}]=0$. Sometimes we denote the above central extension by a pair $(\widehat{L},\tau)$ and call a central extension of $L$ by $\mathfrak{c}$. A central extension $(\widehat{L},\tau)$ is called universal if for any central extension $(\widetilde{L}, \iota)$ of $L$ there exists a unique homomorphism $\psi: \widehat{L}\rightarrow \widetilde{L}$ such that $\iota\circ\psi =\tau$. A Lie superalgebra
$L$ has the universal central extension if and only if $L$ is perfect.

It is well known that $\widehat{L}=L\oplus \bC C$ is a $1$-dimensional central extension of Lie superalgebra $L$ if and only if $\widehat{L}$ is the
direct sum of $L$ and $\bC C$ as vector spaces and the bracket $[\cdot,\cdot]_1$ in $\widehat{L}$ is given by
$$[x,y]_1=[x,y]+\alpha(x,y)C,\ \ [x, C]_1=0$$
for all $x, y\in L$, where $[\cdot,\cdot]$ is the bracket in $L$ and $\alpha: L\times L\rightarrow \bC$ is a bilinear form on
$L$ satisfying the following conditions
\begin{align*}
&\alpha(x,y)=-(-1)^{|x||y|}\alpha(y,x),\\
&\alpha(x,[y,z])=\alpha([x,y],z)+(-1)^{|x||y|}\alpha(y,[x,z])
\end{align*}
for $x,y,z\in L$. The bilinear form $\alpha$ is called a 2-cocycle on $L$.  A 2-cocycle is called a 2-coboundary if there is a linear function
$\rho$ from $L$ to $\bC$ such that $\alpha(x,y)=\rho([x,y])$ for all $x,y\in L$. The set of all 2-cocycles on $L$ is a vector space, denoted by $Z^2(L, \bC)$.
The set of all 2-coboundaries is a subspace of $Z^2(L, \bC)$, denoted by $B^2(L, \bC)$.
From \cite{IK}, the set of equivalence classes of such central extensions are known to be parameterized by the second
cohomology group $\mbox{H}^2(L, \bC)=Z^2(L,\bC)/B^2(L, \bC)$.

\subsection{Weight modules}
For the Lie superalgerba $\mathcal L$ (resp. $\mathfrak L$) defined in Section 2.1,  an $\mathcal L$-module (resp. $\mathfrak L$-module) $V$ is called a weight module if the action of $d_0$ on $V$ is diagonalizable, i.e. $V=\bigoplus\limits_{\lambda\in \bC} V_{\lambda}$, where $V_{\lambda}=\{v\in V\mid d_0 v=\lambda v\}$.  The support set of a weight module $V$ is defined by $\supp(V)=\{\lambda\in \bC\mid V_{\lambda}\ne 0\}$. A weight $\mathcal L$-module (resp. weight $\mathfrak L$-module) $V$ is called Harish-Chandra if $\dim V_{\lambda}<\infty,\forall \lambda\in \supp(V)$, and is called
cuspidal or uniformly bounded if  there exists some $N\in \bN$ such that $\dim V_{\lambda}\le N,\forall \lambda\in \supp(V)$.

For the above $\mathcal{A}$ and $\fg$, set $\wt{\mathcal{L}}=\mathcal{W}\ltimes((\fg\otimes \mathcal{A})\oplus \mathcal{A})$. An $\wt{\mathcal{L}}$-module $V$ is called an $\mathcal{A}\mathcal{L}$-module if $\mathcal{A}$ acts associatively, i.e. $t^0 v=v, fgv=f(gv),\forall f,g\in \mathcal{A}, v\in V$. Let $V$ be a weight $\mathcal{A}\mathcal{L}$-module and $V=\bigoplus\limits_{\lambda\in \bC} V_{\lambda}$. For any $v\in V_\lambda$ and $i\in\bZ$, we have
\begin{align*}
&d_0d_i v=(\lambda+i)d_iv,\;\;\;d_0h_iv=(\lambda+i)h_iv,\\
&d_0Q_iv=(\lambda+i)Q_iv,\;\;\;d_0G_iv=(\lambda+i)G_iv,\\
&d_0(x\otimes t^i) v=(\lambda+i)x\otimes t^i v,\;\;\;d_0(x\otimes t^i\xi) v=(\lambda+i)x\otimes t^i\xi v,\\
&d_0 t^i v=(\lambda+i)t^i v,\;\;\;d_0 t^i\xi v=(\lambda+i)t^i\xi v.
\end{align*}
Thus if $V$ is simple, then $\supp(V)=\lambda+\bZ$ for some $\lambda\in\bC$.

\subsection{Some useful results}
A module $M$ over an associative superalgebra $B$ is called strictly simple if it is a simple module over the associative algebra $B$ (forgetting the $\bZ_2$-gradation).
\begin{lemm}\label{XL2-1}
(\cite{XL2}, Lemma 2.1, 2.2)Let $B, B'$ be unital associative superalgebras, and $M, M'$ be $B, B'$ modules, respectively.

1. $M\otimes M'\cong \Pi(M)\otimes \Pi(M'^T)$ as $B\otimes B'$-modules.

2. If $B'$ has a countable basis and $M'$ is strictly simple, then

(1) any $B\otimes B'$-submodule of $M\otimes M'$ is of the form $N\otimes M'$ for some $B$-submodule $N$ of $M$;

(2) any simple quotient of the $B\otimes B'$-module $M\otimes M'$ is isomorphic to some $\overline{M}\otimes M'$ for some simple quotient $\overline{M}$ of $M$;

(3) $M\otimes M'$ is a simple $B\otimes B'$-module if and only if $M$ is a simple $B$-module;

(4) if $V$ is a simple $B\otimes B'$-module containing a strictly simple $B'=\bC\otimes B'$-module $M'$, then $V\cong M\otimes M'$ for some simple $B$-module
$M$.
\end{lemm}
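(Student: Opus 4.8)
The plan is to regard this as the $\bZ_2$-graded version of the classical tensor-product density lemma, so that the whole statement reduces to the ungraded Jacobson density theorem plus a Schur/Dixmier argument, together with careful bookkeeping of parity signs. For part~1 the required isomorphism is a parity-adjusted identity map $m\otimes m'\mapsto\pm\,m\otimes m'$, and the only content is to verify that the twisted $B\otimes B'$-action on $\Pi(M)\otimes\Pi(M'^{T})$ agrees with that on $M\otimes M'$; this is a routine computation using the sign rule $(b\otimes 1)(1\otimes b')=(-1)^{|b||b'|}(1\otimes b')(b\otimes 1)$ in $B\otimes B'$, and I expect no difficulty beyond not losing a sign.

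The substance is part~2(1). First I would extract the consequence of the two hypotheses: since $M'$ is strictly simple and $B'$ has a countable basis, $M'$ has at most countable dimension over $\bC$, so $\mathrm{End}_{B'}(M')$ is a division algebra of at most countable dimension over the algebraically closed field $\bC$ and therefore equals $\bC$ (Dixmier's lemma). By the Jacobson density theorem, $B'$ then acts $\bC$-densely on $M'$: for any linearly independent $m_1',\dots,m_n'\in M'$ and arbitrary $v_1',\dots,v_n'\in M'$ there is $b'\in B'$ with $b'm_i'=v_i'$ for all $i$. Now let $W$ be a nonzero $B\otimes B'$-submodule of $M\otimes M'$ and pick $0\ne w=\sum_{i=1}^{n}m_i\otimes m_i'\in W$ with the $m_i'$ linearly independent and the $m_i$ homogeneous; applying $1\otimes b'$ for a $b'$ that sends $m_i'$ to a prescribed element and the other $m_j'$ to $0$ isolates one summand, so $m_i\otimes M'\subseteq W$ for every $i$. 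Setting $N=\{m\in M\mid m\otimes M'\subseteq W\}$, one checks, using $(b\otimes 1)(m\otimes m')=\pm\,bm\otimes m'$, that $N$ is a $B$-submodule, and the displayed inclusions together with the linear-independence reduction give $W=N\otimes M'$.

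The remaining items follow formally from (1). For (2), write the kernel of a surjection $M\otimes M'\twoheadrightarrow V$ as $N\otimes M'$ by (1), so $V\cong(M/N)\otimes M'$, and then apply (1) to $M/N$: its submodules correspond to $B$-submodules of $M/N$, so $V$ simple forces $M/N$ simple, i.e. $M/N$ is a simple quotient $\overline M$ of $M$. Item (3) is immediate from (1) (a nonzero submodule of $M\otimes M'$ is $N\otimes M'$ with $N\ne 0$, hence proper iff $N$ is proper). For (4), given a simple $V$ containing a strictly simple $\bC\otimes B'$-submodule isomorphic to $M'$, put $M=\mathrm{Hom}_{B'}(M',V)$ and make it a $B$-module by $(b\cdot f)(m')=\pm\,(b\otimes 1)f(m')$, with the sign chosen exactly as in part~1; the evaluation map $M\otimes M'\to V$, $f\otimes m'\mapsto f(m')$, is a $B\otimes B'$-homomorphism which is nonzero because $M'$ embeds in $V$, hence surjective, and (2) yields $V\cong\overline M\otimes M'$ for a simple $B$-module $\overline M$.

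The obstacle I anticipate is organizational rather than conceptual: keeping the parity conventions consistent across the three places they enter --- the twisted product on $B\otimes B'$, the parity-change functor $\Pi$ and the twist $M'^{T}$ in part~1, and the induced $B$-action on $\mathrm{Hom}_{B'}(M',V)$ in (4) --- and making sure that \emph{strictly} simple (simple after forgetting the grading) is exactly what licenses the ordinary rather than a graded density theorem. The only non-formal input is the identification $\mathrm{End}_{B'}(M')=\bC$, and I would want to flag explicitly that this is precisely where the countable-basis hypothesis is consumed.
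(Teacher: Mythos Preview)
The paper does not prove this lemma at all: it is quoted verbatim from \cite{XL2} (Lemmas~2.1 and~2.2 there) and used as a black box, so there is no ``paper's own proof'' to compare against. Your sketch is the standard argument and is correct in outline; the Dixmier/Jacobson-density route for 2(1) and the formal deductions of (2)--(4) from (1) are exactly how such results are established. One small point worth making explicit when you write it up: in the density step you apply $1\otimes b'$ to $w=\sum m_i\otimes m_i'$, but for inhomogeneous $b'$ the sign $(-1)^{|b'||m_i|}$ is ill-defined. The fix is easy and you have essentially set it up already: take $w$ and the $m_i'$ homogeneous, and observe that if ungraded density produces $b'\in B'$ sending the homogeneous family $(m_i')$ to a homogeneous target $(\delta_{i1}v')$, then the appropriate homogeneous component of $b'$ already does the job, so one may assume $b'$ homogeneous. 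With that remark in place your argument goes through verbatim.
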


\section{The universal central extension of $\mathcal{L}$}
In this section, we discuss the structure of the universal central extension of $\mathcal{L}=\mathcal{W}\ltimes(\fg\otimes \mathcal{A})$ by the $1$-dimensional center, where $\fg$ is a finite-dimensional perfect Lie superalgebra.

Let $\alpha^\prime$ be a 2-cocycle on $\mathcal{L}$ and $\rho$ be a linear function from $\mathcal{L}$ to $\bC$.
Set $\alpha=\alpha^\prime+\alpha_{\rho}$, where $\alpha_{\rho}\in B^2(\mathcal{L}, \bC)$ and $\alpha_{\rho}(x, y)=\rho([x,y])$. Then $\alpha$ is a 2-cocycle on $\mathcal{L}$ which
is equivalent to $\alpha^\prime$. For a given $x\in\fg$, we define
\begin{align*}
&\rho(x\otimes 1)=\alpha^\prime(d_1, x\otimes t^{-1}),\ \ \rho(x\otimes t^k)=-\frac{1}{k}\alpha^\prime(d_0, x\otimes t^k), k\neq 0,\\
&\rho(x\otimes \xi)=\alpha^\prime(d_1, x\otimes t^{-1}\xi), \ \ \rho(x\otimes t^k\xi)=-\frac{1}{k}\alpha^\prime(d_0, x\otimes t^k\xi), k\neq 0.
\end{align*}
So $\alpha(d_0, x\otimes t^k)=\alpha(d_0, x\otimes t^k\xi)=0$ for $k\neq 0$.

By the definition of 2-cocycle, we have
$$\alpha(d_0, [d_i, x\otimes t^j])=\alpha([d_0, d_i], x\otimes t^j)+\alpha(d_i, [d_0, x\otimes t^j]).$$
Because $\alpha(d_0, x\otimes t^k)=0$, there is $(i+j)\alpha(d_i, x\otimes t^j)=0$. Then $\alpha(d_i, x\otimes t^j)=0, i+j\neq 0$.
Similarly, for $i+j\neq 0$, we get
\begin{align*}
\alpha(h_i, x\otimes t^j)&=\alpha(Q_i, x\otimes t^j)=\alpha(G_i, x\otimes t^j)=0,\\
\alpha(d_i, x\otimes t^j\xi)&=\alpha(h_i, x\otimes t^j\xi)=\alpha(Q_i, x\otimes t^j\xi)=\alpha(G_i, x\otimes t^j\xi)=0.
\end{align*}

Let $p_x(i)=\alpha(d_i, x\otimes t^{-i})$. Since
$$\alpha(d_{i+j}, [d_{-i}, x\otimes t^{-j}])=\alpha([d_{i+j}, d_{-i}], x\otimes t^{-j})+\alpha(d_{-i}, [d_{i+j}, x\otimes t^{-j}]),$$
we have $jp_x(i+j)=(2i+j)p_x(j)+jp_x(-i)$.
By letting $i=-1$, there is $jp_x(j-1)=(j-2)p_x(j)+jp_x(1)$.
Because $\rho(x\otimes 1)=\alpha^\prime(d_1, x\otimes t^{-1})$, $p_x(1)=\alpha^\prime(d_1, x\otimes t^{-1})+\rho([d_1, x\otimes t^{-1}])=0$. Then
$(j-2)p_x(j)=jp_x(j-1)$. And we get $p_x(j)=\frac{j(j-1)}{2}p_x(2)$ by recursion. Hence
$$\alpha(d_i, x\otimes t^j)=\frac{i(i-1)}{2}p_x(2)\delta_{i+j, 0}.$$
Similarly, we get
$$\alpha(d_i, x\otimes t^j\xi)=\frac{i(i-1)}{2}p^\prime_x(2)\delta_{i+j, 0},$$
where $p^\prime_x(i)=\alpha(d_i, x\otimes t^{-i}\xi)$.

Let $a_x(i)=\alpha(h_i, x\otimes t^{-i})$. Since
$$\alpha(h_{i+j}, [d_{-i}, x\otimes t^{-j}])=\alpha([h_{i+j}, d_{-i}], x\otimes t^{-j})+\alpha(d_{-i}, [h_{i+j}, x\otimes t^{-j}]),$$
we have $ja_x(i+j)=(i+j)a_x(j)$. By letting $j=1$, there is $a_x(i)=ia_x(1)$. Hence
$$\alpha(h_i, x\otimes t^j)=ia_x(1)\delta_{i+j, 0}.$$
And similarly, we get
$$\alpha(Q_i, x\otimes t^j)=ib_x(1)\delta_{i+j, 0},$$
where $b_x(i)=\alpha(Q_i, x\otimes t^{-i})$.

Let $a_x^\prime(i)=\alpha(h_i, x\otimes t^{-i}\xi)$. Since
$$\alpha(h_{i+j}, [d_{-i}, x\otimes t^{-j}\xi])=\alpha([h_{i+j}, d_{-i}], x\otimes t^{-j}\xi)+\alpha(d_{-i}, [h_{i+j}, x\otimes t^{-j}\xi]),$$
we have
\begin{equation}\label{1}
ja_x^\prime(i+j)=(i+j)a_x^\prime(j)-p_x^\prime(-i).
\end{equation}
Putting $j=1$ in \eqref{1}, there is $a_x^\prime(i)=ia_x^\prime(1)-\frac{i(i-1)}{2}p_x^\prime(2)$. Putting $i=-1$ in \eqref{1},
there is $a_x^\prime(j)=ja_x^\prime(1)$ since $p_x^\prime(1)=0$. So
$p_x^\prime(2)=0$ and
$$\alpha(h_i, x\otimes t^j\xi)=ia_x^\prime(1)\delta_{i+j, 0}.$$
Similarly, we have $p_x(2)=0$ and
$$\alpha(Q_i, x\otimes t^j\xi)=ib_x^\prime(1)\delta_{i+j, 0},$$
where $b_x^\prime(i)=\alpha(Q_i, x\otimes t^{-i}\xi)$.

Let $q_x(i)=\alpha(G_i, x\otimes t^{-i})$. Since
$$\alpha(G_{i+j}, [d_{-i}, x\otimes t^{-j}])=\alpha([G_{i+j}, d_{-i}], x\otimes t^{-j})+\alpha(d_{-i}, [G_{i+j}, x\otimes t^{-j}]),$$
we have
\begin{equation}\label{2}
jq_x(i+j)=(2i+j)q_x(j).
\end{equation}
Putting $j=1$ in \eqref{2}, there is $q_x(i)=(2i-1)q_x(1)$. Putting $i=1$ in \eqref{2}, there is $q_x(j)=\frac{j(j+1)}{2}q_x(1)$. So
$q_x(1)=0$ and
$$\alpha(G_i, x\otimes t^j)=0.$$
Similarly, we get
$$\alpha(G_i, x\otimes t^j\xi)=0.$$
The Jacobi identity for the 2-cocycle $\alpha$ for the elements $h_{i+j}, Q_{-i}$ and $x\otimes t^{-j}$ gives $b_x(1)=0$.
The Jacobi identity for the 2-cocycle $\alpha$ for the elements $h_{i+j}, G_{-i}$ and $x\otimes t^{-j}$ gives $a_x^\prime(1)=0$.
The Jacobi identity for the 2-cocycle $\alpha$ for the elements $h_{i+j}, Q_{-i}$ and $x\otimes t^{-j}\xi$ gives $b_x^\prime(1)=-(-1)^{|x|}a_x(1)$.
Furthermore, since
$$\alpha(x\otimes t^{i+j}, [h_{-i}, y\otimes t^{-j}])=\alpha([x\otimes t^{i+j}, h_{-i}], y\otimes t^{-j})+\alpha(h_{-i}, [x\otimes t^{i+j}, y\otimes t^{-j}])$$
and $\fg=[\fg,\fg]$, we get $a_x(1)=0$. Therefore,
\begin{align*}
&\alpha(d_i, x\otimes t^j)=\alpha(h_i, x\otimes t^j)=\alpha(Q_i, x\otimes t^j)=\alpha(G_i, x\otimes t^j)=0,\\
&\alpha(d_i, x\otimes t^j\xi)=\alpha(h_i, x\otimes t^j\xi)=\alpha(Q_i, x\otimes t^j\xi)=\alpha(G_i, x\otimes t^j\xi)=0
\end{align*}
for any $x\in \fg$ and $i,j\in\bZ$.

As above, it is easy to get
$$\alpha(x\otimes t^i, y\otimes t^j)=\alpha(x\otimes t^i\xi, y\otimes t^j)=\alpha(x\otimes t^i\xi, y\otimes t^j\xi)=0$$
for any $x,y\in\fg$ and $i+j\neq 0$. Since
$$\alpha(x\otimes t^{i+j}, [d_{-i}, y\otimes t^{-j}])=\alpha([x\otimes t^{i+j}, d_{-i}], y\otimes t^{-j})+\alpha(d_{-i}, [x\otimes t^{i+j}, y\otimes t^{-j}]),$$
we have $j\alpha(x\otimes t^{i+j}, y\otimes t^{-(i+j)})=(i+j)\alpha(x\otimes t^j, y\otimes t^{-j})$. By letting $j=1$, there is
$\alpha(x\otimes t^i, y\otimes t^{-i})=i\alpha(x\otimes t, y\otimes t^{-1})$. So
$$\alpha(x\otimes t^i, y\otimes t^j)=i\alpha(x\otimes t, y\otimes t^{-1})\delta_{i+j,0}.$$
Similarly, we get
\begin{align*}
&\alpha(x\otimes t^i\xi, y\otimes t^j\xi)=i\alpha(x\otimes t\xi, y\otimes t^{-1}\xi)\delta_{i+j,0},\\
&\alpha(x\otimes t^i\xi, y\otimes t^j)=i\alpha(x\otimes t\xi, y\otimes t^{-1})\delta_{i+j,0}.
\end{align*}
Since
\begin{equation}\label{keyeq}\alpha(x\otimes t^{i+j}, [Q_{-i}, y\otimes t^{-j}\xi])=\alpha([x\otimes t^{i+j}, Q_{-i}], y\otimes t^{-j}\xi)+(-1)^{|x|}\alpha(Q_{-i}, [x\otimes t^{i+j}, y\otimes t^{-j}\xi]),\end{equation}
we get $(i+j)\alpha(x\otimes t, y\otimes t^{-1})=0$. Then $\alpha(x\otimes t, y\otimes t^{-1})=0$. Moreover, we get
$\alpha(x\otimes t\xi, y\otimes t^{-1}\xi)=\alpha(x\otimes t\xi, y\otimes t^{-1})=0$. Therefore,
$$\alpha(x\otimes t^i, y\otimes t^j)=\alpha(x\otimes t^i\xi, y\otimes t^j)=\alpha(x\otimes t^i\xi, y\otimes t^j\xi)=0$$
for any $x,y\in\fg$ and $i,j\in\bZ$.

Now, we know that any nontrivial 2-cocycle $\alpha$ on $\mathcal{L}$ can be induced by a nontrivial 2-cocycle on $\mathcal{W}$. According to the conclusion of \cite{Kv}, we get
$$\alpha(d_i, h_j)=-i^2\delta_{i+j, 0},\ \alpha(h_i, h_j)=2i\delta_{i+j, 0},\ \alpha(Q_i, G_j)=-i^2\delta_{i+j, 0},$$
and zero in all other cases. In addition, since $[\mathcal{L}, \mathcal{L}]=\mathcal{L}$, we obtain the universal central extension of $\mathcal{L}$, which is denoted by $\widehat{\mathcal{L}}$.

\begin{theo}\label{2-cocycle}
The super affine-Virasoro algebra $\widehat{\mathcal{L}}=\mathcal{L}\oplus \mathbb{C}C$ and the nonzero brackets of $\widehat{\mathcal{L}}$ are as follows:
\begin{align*}
&[d_i,d_j]=(j-i)d_{i+j},\;[d_i,h_j]=jh_{i+j}-i^2\delta_{i+j,0}C,\\
&[d_i,Q_j]=jQ_{i+j},\;[d_i,G_j]=(j-i)G_{i+j},\;[h_i,Q_j]=-Q_{i+j}, \;[h_i,G_j]=G_{i+j},\\
&[h_i,h_j]=2i\delta_{i+j,0}C,\;[G_i,G_j]=d_{i+j}+ih_{i+j}-i^2\delta_{i+j,0}C,\\
&[d_i,x\otimes t^j\xi]=jx\otimes t^{i+j}\xi,\ [d_i,x\otimes t^j]=jx\otimes t^{i+j},\\
&[h_i,x\otimes t^j\xi]=x\otimes t^{i+j}\xi,\ [Q_i,x\otimes t^j\xi]=(-1)^{|x|}x\otimes t^{i+j},\\
&[G_i,x\otimes t^j]=(-1)^{|x|}jx\otimes t^{i+j}\xi,\ [x\otimes t^i, y\otimes t^j]=[x,y]\otimes t^{i+j},\\
&[x\otimes t^i\xi, y\otimes t^j]=(-1)^{|y|}[x,y]\otimes t^{i+j}\xi,
\end{align*}
where $i,j\in\bZ$ and $x,y\in\fg$.
\end{theo}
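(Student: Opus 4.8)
The plan is to show that every $2$-cocycle on $\mathcal{L}$ is, up to a coboundary, the inflation of a $2$-cocycle on the quotient $\mathcal{L}/(\fg\otimes\mathcal{A})\cong\mathcal{W}$, then to invoke the known computation of $\mathrm{H}^2(\mathcal{W},\bC)$ for the (centerless $N=2$ Ramond) Witt superalgebra, and finally to use that $\mathcal{L}$ is perfect to upgrade the resulting one-dimensional central extension to the universal one. The whole argument is the display-heavy computation carried out just above the statement; here I describe its skeleton.

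First I would fix an arbitrary $2$-cocycle $\alpha'$ and replace it by the cohomologous cocycle $\alpha=\alpha'+\alpha_\rho$, choosing the linear functional $\rho\colon\mathcal{L}\to\bC$ on the loop part $\fg\otimes\mathcal{A}$ by the formulas displayed above so that $\alpha(d_0,x\otimes t^k)=\alpha(d_0,x\otimes t^k\xi)=0$ for all $k\neq 0$ and all $x\in\fg$; this normalization is the only liberty I will take. Next I would evaluate the cocycle identity $\alpha(d_0,[w,u])=\alpha([d_0,w],u)+\alpha(w,[d_0,u])$ for $w$ ranging over $d_i,h_i,Q_i,G_i$ and $u$ over $x\otimes t^j$ and $x\otimes t^j\xi$; since $d_0$ acts on loop elements by the total degree and $\alpha(d_0,\cdot)$ now vanishes there, this immediately kills $\alpha(w,u)$ in all nonzero total degrees. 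For the remaining degree-zero values I would introduce the one-variable functions $p_x(i)=\alpha(d_i,x\otimes t^{-i})$, $a_x(i)=\alpha(h_i,x\otimes t^{-i})$, $q_x(i)=\alpha(G_i,x\otimes t^{-i})$ and their $\xi$-analogues, and feed triples such as $(d_{i+j},d_{-i},x\otimes t^{-j})$, $(h_{i+j},d_{-i},x\otimes t^{-j})$, $(G_{i+j},d_{-i},x\otimes t^{-j})$, $(h_{i+j},d_{-i},x\otimes t^{-j}\xi)$ into the cocycle identity; each produces a recursion whose solution is a fixed polynomial in $i$ times one scalar, and the value at $i=1$ vanishes by the normalization, killing most of these scalars. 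The cross relations on $(h_{i+j},Q_{-i},x\otimes t^{-j})$, $(h_{i+j},G_{-i},x\otimes t^{-j})$, $(h_{i+j},Q_{-i},x\otimes t^{-j}\xi)$ then tie the few survivors to $a_x(1)$, and the identity on $(x\otimes t^{i+j},h_{-i},y\otimes t^{-j})$ combined with $\fg=[\fg,\fg]$ forces $a_x(1)=0$; hence $\alpha$ vanishes on $\mathcal{W}\times(\fg\otimes\mathcal{A})$.

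An entirely parallel degree analysis on $(\fg\otimes\mathcal{A})\times(\fg\otimes\mathcal{A})$ would reduce matters to the three scalars $\alpha(x\otimes t,y\otimes t^{-1})$, $\alpha(x\otimes t\xi,y\otimes t^{-1}\xi)$, $\alpha(x\otimes t\xi,y\otimes t^{-1})$, which the key identity \eqref{keyeq} evaluated at $(x\otimes t^{i+j},Q_{-i},y\otimes t^{-j}\xi)$ forces to be zero, carrying the sign $(-1)^{|x|}$ carefully. At this point $\alpha$ is supported on $\mathcal{W}\times\mathcal{W}$, so it is the inflation of a $2$-cocycle on $\mathcal{W}$; by the classification in \cite{Kv} this is cohomologous to the one with $\alpha(d_i,h_j)=-i^2\delta_{i+j,0}$, $\alpha(h_i,h_j)=2i\delta_{i+j,0}$, $\alpha(Q_i,G_j)=-i^2\delta_{i+j,0}$ and zero elsewhere, so $\mathrm{H}^2(\mathcal{L},\bC)$ is one-dimensional with this representative. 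Finally, since $\mathcal{L}=[\mathcal{L},\mathcal{L}]$, a universal central extension of $\mathcal{L}$ exists, and a perfect Lie superalgebra with one-dimensional second cohomology has its central extension by this cocycle as the universal one; reading off the brackets of $\mathcal{L}\oplus\bC C$ with this cocycle yields the list in the statement.

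The hard part is not conceptual but organizational: there is no single difficult step, but for each of the many generator pairs — the four families $d,h,Q,G$ against the two loop families $x\otimes t^j$, $x\otimes t^j\xi$, together with the pairs inside $\fg\otimes\mathcal{A}$ — one must select exactly the right triple in the cocycle identity to obtain a solvable recursion and track the super signs without error. The one genuinely essential hypothesis beyond this bookkeeping is the perfectness $\fg=[\fg,\fg]$, which is what annihilates the last surviving scalar $a_x(1)$ (equivalently, the would-be loop-algebra central term).
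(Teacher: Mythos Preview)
Your proposal is correct and follows essentially the same approach as the paper: normalize an arbitrary cocycle by a coboundary so that $\alpha(d_0,\cdot)$ vanishes on the loop part, use the $d_0$-graded cocycle identity to reduce to degree-zero values, solve the resulting one-variable recursions (for $p_x,a_x,q_x$ and their $\xi$-analogues) via well-chosen triples, eliminate the survivors with the cross relations and the perfectness of $\fg$, kill the loop--loop contributions via the $Q_{-i}$ identity \eqref{keyeq}, and finally invoke \cite{Kv} for $\mathrm{H}^2(\mathcal{W},\bC)$ together with $[\mathcal{L},\mathcal{L}]=\mathcal{L}$ for universality. The only imprecision is the phrase ``the value at $i=1$ vanishes by the normalization, killing most of these scalars'': the normalization directly gives only $p_x(1)=p_x'(1)=0$, while $p_x(2),p_x'(2),a_x(1),b_x(1),a_x'(1),b_x'(1)$ are eliminated in a second pass by the cross-recursions among these functions and the Jacobi identities involving $h,Q,G$, exactly as the paper does.
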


\begin{rema} \label{witt} From the above we see that $\dim\,H^2(\mathcal L, \mathbb C)=1$, which is different from that of Lie algebras case (see \cite{LPX1}), is also different from that of the current Lie superalgebra $\fg\otimes \mathcal{A}$ (see \cite{NY}).
The key point is  the role of the Witt superalgebra $\mathcal W$, see \eqref{keyeq}. In fact, if we consider the Lie superalgebra $\mathfrak L=\mathfrak{W}\ltimes(\fg\otimes \mathcal{A})$ for a finite-dimensional basic classical simple Lie superalgebra $\frak g$, then from the similar proof of Theorem \ref{2-cocycle} in this paper and Proposition 3.14 in \cite{NY} we can obtain that $\dim\,H^2(\mathfrak L, \mathbb C)=2$.
\end{rema}

It is clear that $\widehat{\mathcal{L}}$ has a $\mathbb{Z}$-grading by the eigenvalues of the adjoint action of $d_0$. Then
$$\widehat{\mathcal{L}}=\bigoplus_{n\in\mathbb{Z}}\widehat{\mathcal{L}}_n=\widehat{\mathcal{L}}_+\oplus \widehat{\mathcal{L}}_0\oplus \widehat{\mathcal{L}}_-,$$
where $$\widehat{\mathcal{L}}_{\pm}=\bigoplus_{n\in\mathbb{N}}\widehat{\mathcal{L}}_{\pm n},\ \ \widehat{\mathcal{L}}_0=\fg\oplus \mathbb{C}C+\mathbb{C}d_0+\mathbb{C}h_0+\mathbb{C}Q_0+\mathbb{C}G_0.$$

Let $\fh$ be the Cartan subalgebra of Lie superalgebra $\fg$. Then $\widehat{\fh}=\fh\oplus \mathbb{C}C+\mathbb{C}d_0+\mathbb{C}h_0$ be the cartan subalgebra
of $\widehat{\mathcal{L}}$.  A highest weight module over $\widehat{\mathcal{L}}$ is characterized by
its highest weight $\Lambda\in \widehat{\fh}^*$ and highest weight vector $v_0$ such that
$(\widehat{\mathcal L}_+\oplus\fg_+)v_0=0$ and $hv_0=\Lambda(h)v_0, \forall h\in \widehat{\fh}$.

\section{The $\mathcal{A}\mathcal{L}$-modules}
Let $\cU=U(\wt{\mathcal{L}})$ and ${\cI}$ be the left ideal of $\cU $ generated by $t^i\cdot t^j-t^{i+j}$, $t^0-1$, $t^i\cdot \xi-t^i\xi$ and $\xi\cdot\xi$ for all $i, j\in\mathbb{Z}$. Then it is clear that $\cI$ is an ideal of $\cU$. Now we have the quotient algebra $\ol{\cU}=\cU/\cI=(U(\mathcal{L})U(\mathcal{A}))/\mathcal{I}$. From PBW Theorem, we may identify $\mathcal{A}$, $\mathcal{L}$ with their images in $\ol{\cU}$. Thus $\ol{\cU}=\mathcal{A}\cdot U(\mathcal{L})$. Then the category of $\mathcal{A}\mathcal{L}$-modules is equivalent to the category of $\ol{\cU}$-modules.

For $i\in \bZ\backslash \{0\}$, let $\cT$ be a subspace of $\ol{\cU}$ spanned by $\{t^{-i}\cdot d_i-d_0,\ t^{-i}\cdot Q_i-Q_0,\ t^{-i}\xi\cdot d_i-t^{-i}\cdot G_i,\
t^{-i}\xi\cdot Q_i-t^{-i}\cdot h_i,\ t^{-i}\cdot (x\otimes t^i),\ t^{-i}\xi\cdot (x\otimes t^i)-(-1)^{|x|}t^{-i}\cdot (x\otimes t^i\xi)\}$.
\begin{lemm}
\begin{enumerate}
\item $[\cT,d_0]=[\cT, Q_0]=[\cT, \mathcal{A}]=0$;
\item $\cT$ is a Lie super subalgebra of $\ol{\cU}$.
\end{enumerate}
\end{lemm}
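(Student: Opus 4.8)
The plan is to verify both parts by a direct computation inside the associative superalgebra $\ol{\cU}$, using only three inputs: the brackets of $\wt{\mathcal L}$ listed in Section~2.1, the defining relations of the ideal $\cI$ (that is $t^i\cdot t^j=t^{i+j}$, $t^0=1$, $t^i\cdot\xi=t^i\xi$, $\xi\cdot\xi=0$), and the super-Leibniz rule $[ab,c]=a[b,c]+(-1)^{|b||c|}[a,c]b$ in $\ol{\cU}$. Two preliminary observations streamline everything. First, each listed spanning element is homogeneous of weight $0$ for $\mathrm{ad}\,d_0$, since $t^{-i}$ has weight $-i$ while $d_i,Q_i,G_i,h_i$ and $x\otimes t^i$ have weight $i$; hence $\cT\subseteq\ol{\cU}_0$, which already gives $[\cT,d_0]=0$. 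Second, $(\fg\otimes\mathcal A)\oplus\mathcal A$ is an abelian ideal of $\wt{\mathcal L}$, so $[\fg\otimes\mathcal A,\mathcal A]=0$ in $\ol{\cU}$; consequently the only mechanism by which a spanning element of $\cT$ can fail to commute with $\mathcal A$ (or with $Q_0$) is the Witt-superalgebra part acting on $\mathcal A$ by superderivations, and the spanning elements are built precisely so that this action is cancelled by the subtracted term.

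For part~(1), since $\mathcal A$ is generated as an algebra in $\ol{\cU}$ by $t,t^{-1},\xi$ and $[-,bc]$ obeys the Leibniz rule, it suffices to bracket each of the six families of spanning elements with $t^{\pm1}$ and with $\xi$; each case is a one-line calculation. For instance $[t^{-i}d_i-d_0,t^j]=j\,t^{-i}t^{i+j}-j\,t^j=0$, $[t^{-i}\xi d_i-t^{-i}G_i,t^j]=j\,t^{-i}t^{i+j}\xi-j\,t^{-i}t^{i+j}\xi=0$ (using $G_i(t^j)=jt^{i+j}\xi$), and $[t^{-i}(x\otimes t^i),t^j]=0$ because $x\otimes t^i$ and $t^j$ super-commute; the remaining families, and the brackets with $\xi$ (where $\xi\cdot\xi=0$ kills the only terms that could survive), are entirely analogous. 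The equalities $[\cT,Q_0]=0$ are checked the same way, family by family, now using $Q_0(t^{-i})=0$, $Q_0(t^{-i}\xi)=t^{-i}$, and the relations $[Q_0,G_i]=d_i$, $[h_i,Q_0]=-Q_i$, $[Q_0,x\otimes t^i\xi]=(-1)^{|x|}x\otimes t^i$, $[Q_0,x\otimes t^i]=0$; in each case the two contributions cancel in pairs.

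For part~(2), one computes the bracket of each ordered pair of the six families of spanning elements; as the list of families is finite this is a bounded (if lengthy) verification, and the super-Leibniz rule reduces every bracket to the structure constants of $\wt{\mathcal L}$ together with the $\cI$-relations. Generic outputs already lie in $\cT$, e.g. $[t^{-i}(x\otimes t^i),t^{-j}(y\otimes t^j)]=t^{-(i+j)}([x,y]\otimes t^{i+j})$, and $[t^{-i}d_i-d_0,t^{-j}d_j-d_0]$ reduces, after the $d_0$-terms cancel, to a combination of spanning elements of the first family with indices $i$, $j$ and $i+j$. I expect the main obstacle to be the \emph{resonant} brackets, those with vanishing total weight: there $t^{-i}t^{i}=t^0=1$ collapses a family member to a ``constant'' element such as $[x,y]\otimes1$, $\xi([x,y]\otimes1)-(-1)^{|[x,y]|}([x,y]\otimes\xi)$, or an element built from $d_0,h_0,Q_0,G_0$, and one must check directly that each such element still lies in $\cT$; the delicate point throughout is the sign bookkeeping, since several families involve products of two odd elements and of $\xi$, so a stray factor of $(-1)^{|x|}$ is easy to lose. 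Alternatively, part~(2) can be deduced from part~(1): by the Jacobi identity $[\cT,\cT]$ centralizes the subalgebra of $\ol{\cU}$ generated by $\mathcal A$, $d_0$ and $Q_0$ and lies in $\ol{\cU}_0$, so it suffices to identify this degree-zero centralizer with $\cT$, which follows from a PBW-basis argument in $\ol{\cU}=\mathcal A\cdot U(\mathcal L)$.
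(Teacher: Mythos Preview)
The paper states this lemma without proof, so there is no argument to compare against; your direct verification (approach (a)) is exactly what the authors have in mind and is correct.

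Two points nonetheless merit comment. First, a small slip: $(\fg\otimes\mathcal A)\oplus\mathcal A$ is \emph{not} an abelian ideal of $\wt{\mathcal L}$, since $[x\otimes t^i,y\otimes t^j]=[x,y]\otimes t^{i+j}$ is generally nonzero. The conclusion you actually use, $[\fg\otimes\mathcal A,\mathcal A]=0$, is correct for a different reason (the $\mathcal W$-module $\mathcal A$ sits as a direct summand commuting with $\fg\otimes\mathcal A$ in $\wt{\mathcal L}$), so nothing downstream breaks.

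Second, your alternative route (b) to part~(2) has a genuine gap as written. The degree-zero centralizer of $\langle\mathcal A,d_0,Q_0\rangle$ in $\ol{\cU}$ is not $\cT$: it contains, for instance, all products of elements of $\cT$, hence the entire associative subalgebra generated by $\cT$. What would make the argument work is a filtration step you only hint at with ``PBW-basis argument'': each spanning element of $\cT$ lies in the subspace $\mathcal A\cdot\mathcal L\subseteq\ol{\cU}$, and because $[\mathcal L,\mathcal A]\subseteq\mathcal A$ and $[\mathcal L,\mathcal L]\subseteq\mathcal L$, a direct super-Leibniz computation shows $[\mathcal A\cdot\mathcal L,\mathcal A\cdot\mathcal L]\subseteq\mathcal A\cdot\mathcal L$. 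Only after restricting to this subspace can one hope to identify the weight-zero $\cK$-centralizer with $\cT$, and even that identification requires its own check. If you wish to present (b), make this intermediate step explicit; otherwise the straightforward family-by-family computation of (a) is entirely adequate.
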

\begin{prop}
The associative superalgebras $\ol{\cU}$ and $\cK\otimes U(\cT)$ are isomorphic.
\end{prop}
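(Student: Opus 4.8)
The plan is to construct the evident multiplication map $\phi\colon\cK\otimes U(\cT)\to\ol\cU$ and prove that it is bijective: surjectivity is an elementary generator computation, and injectivity will follow by comparing the natural filtrations on the two sides.

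First I would build $\phi$. In $\ol\cU$ the elements of $\cA$ satisfy the defining relations of $\cA$ (by construction of $\ol\cU$), the elements $d_0,Q_0$ act on $\cA$ through the derivations $t\partial/\partial t$ and $\partial/\partial\xi$, and $[d_0,Q_0]=0$; these being exactly the defining relations of the super Weyl algebra, one gets a homomorphism $\iota_\cK\colon\cK\to\ol\cU$. Since $\cT$ is a Lie super subalgebra of $\ol\cU$, the inclusion extends to $\iota_\cT\colon U(\cT)\to\ol\cU$. By the preceding lemma $[\cT,\cA]=[\cT,d_0]=[\cT,Q_0]=0$, and as $\cA,d_0,Q_0$ generate $\cK$, the images of $\cK$ and $U(\cT)$ supercommute in $\ol\cU$; hence multiplication gives a well-defined homomorphism of associative superalgebras $\phi(k\otimes u)=\iota_\cK(k)\,\iota_\cT(u)$.

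Next I would check surjectivity. As $\ol\cU=\cA\cdot U(\cL)$, it is enough to hit $\cA$ and every basis vector of $\cL$. Now $\cA,d_0,Q_0$ lie in $\iota_\cK(\cK)$, and for $i\neq0$ the relations $t^i\cdot t^{-i}=1$ and $t^i\cdot(t^{-i}\xi)=\xi$ in $\ol\cU$, together with the spanning elements of $\cT$, give in succession
\[
d_i=t^i\bigl(t^{-i}d_i-d_0\bigr)+t^id_0,\qquad Q_i=t^i\bigl(t^{-i}Q_i-Q_0\bigr)+t^iQ_0,
\]
\[
G_i=\xi\,d_i-t^i\bigl(t^{-i}\xi d_i-t^{-i}G_i\bigr),\qquad h_i=\xi\,Q_i-t^i\bigl(t^{-i}\xi Q_i-t^{-i}h_i\bigr),
\]
\[
x\otimes t^i=t^i\bigl(t^{-i}(x\otimes t^i)\bigr),\qquad (-1)^{|x|}(x\otimes t^i\xi)=\xi(x\otimes t^i)-t^i\bigl(t^{-i}\xi(x\otimes t^i)-(-1)^{|x|}t^{-i}(x\otimes t^i\xi)\bigr),
\]
and in each identity the right-hand side is a product of elements of $\iota_\cK(\cA)$, $\iota_\cT(\cT)$ and vectors already obtained. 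The level-zero vectors $h_0,G_0,x\otimes1,x\otimes\xi$ are obtained the same way from the level-zero spanning elements of $\cT$, using $\fg=[\fg,\fg]$ where convenient (e.g.\ $x\otimes1=\sum_k[y_k\otimes t,\,z_k\otimes t^{-1}]$ for $x=\sum_k[y_k,z_k]$). Hence $\phi$ is onto.

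For injectivity I would filter $\ol\cU=\cA\cdot U(\cL)$ by the PBW-degree in $U(\cL)$; since $\ol\cU$ is free over $\cA$ on a PBW basis of $U(\cL)$ and $[\cL,\cA]\subseteq\cA$, the associated graded algebra is the free supercommutative $\cA$-algebra on $\cL$, namely $\mathrm{gr}\,\ol\cU\cong\cA\otimes S(\cL)$. Filtering $\cK$ by its degree in $\Delta$, $U(\cT)$ by PBW-degree, and $\cK\otimes U(\cT)$ by the tensor-product filtration, one has $\mathrm{gr}(\cK\otimes U(\cT))\cong(\cA\otimes S(\Delta))\otimes S(\cT)=\cA\otimes S(\Delta\oplus\cT)$. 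Because $\Delta$ and all spanning elements of $\cT$ lie in $\cA\cdot\cL=\ol\cU^{(1)}$, the map $\phi$ is filtered, so $\mathrm{gr}\,\phi\colon\cA\otimes S(\Delta\oplus\cT)\to\cA\otimes S(\cL)$ is an $\cA$-algebra map equal to the identity on $\cA$; it is therefore an isomorphism precisely when its degree-one part $\psi\colon\cA\otimes(\Delta\oplus\cT)\to\cA\otimes\cL$ is an isomorphism of $\cA$-modules. Here $\psi$ sends $d_0\mapsto1\otimes d_0$, $Q_0\mapsto1\otimes Q_0$, $t^{-i}d_i-d_0\mapsto t^{-i}\otimes d_i-1\otimes d_0$, $t^{-i}(x\otimes t^i)\mapsto t^{-i}\otimes(x\otimes t^i)$, and similarly for the remaining generators. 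The hard part — the only step that is not formal bookkeeping — will be to verify that $\psi$ is bijective; I would do this by reading the surjectivity identities above $\cA$-linearly (e.g.\ $1\otimes d_i=t^i\psi(t^{-i}d_i-d_0)+t^i\psi(d_0)$), which expresses each vector of the $\cA$-basis $\{1\otimes d_i,1\otimes h_i,1\otimes Q_i,1\otimes G_i,1\otimes(x\otimes t^i),1\otimes(x\otimes t^i\xi)\}$ of $\cA\otimes\cL$ through the source basis by a triangular system with invertible leading coefficients $t^{\pm i}$, hence gives a two-sided inverse of $\psi$; this is exactly where the particular shape of the generators of $\cT$ and the handling of the level-zero part must be used. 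Since $S_\cA$ carries $\cA$-module isomorphisms to $\cA$-algebra isomorphisms, $\mathrm{gr}\,\phi$ is bijective, and therefore $\phi$ is injective; together with surjectivity this yields the asserted isomorphism $\ol\cU\cong\cK\otimes U(\cT)$.
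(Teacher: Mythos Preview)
Your argument is correct and matches the paper's proof in its overall architecture: you build the multiplication map $\phi$ from the commutation $[\cT,\cK]=0$, and your surjectivity computation (expressing $d_i,h_i,Q_i,G_i,x\otimes t^i,x\otimes t^i\xi$ in terms of $\cA$, $\Delta$, and the spanning elements of $\cT$) is essentially identical to the paper's.

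Where you diverge is in the injectivity step. The paper argues directly from PBW: $\ol\cU$ has a $\cK$-basis of ordered monomials in $\{d_i,Q_i\ (i\neq0),\ h_j,G_j,x\otimes t^j,x\otimes t^j\xi\ (j\in\bZ)\}$, and one passes to monomials in the $\cT$-generators by the same triangular substitution you wrote down for surjectivity; this is a two-line argument, with the key ``therefore'' left implicit. Your filtered/graded approach unpacks exactly that implicit step: passing to $\mathrm{gr}$ linearises the problem, and the bijectivity of $\psi\colon\cA\otimes(\Delta\oplus\cT)\to\cA\otimes\cL$ is precisely the statement that the triangular change of variables is invertible over $\cA$. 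So the two proofs rest on the same linear-algebraic core; yours is longer but makes the mechanism explicit, while the paper's is terser but asks the reader to see why the PBW monomials in the two generating sets correspond. One small point: your handling of the level-zero vectors and the two-sidedness of the inverse of $\psi$ is a bit breezy, but both are routine once you note (as the paper's later isomorphism $\cT\cong\fa_0$ makes clear) that the $h,G,x\otimes t^j,x\otimes t^j\xi$-type generators of $\cT$ should be taken with $j\in\bZ$, not just $j\neq0$.
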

\begin{proof}
Note that $U(\cT)$ is an associative subalgebra of $\ol{\cU}$. Define the map $\phi: \cK\otimes U(\cT)\rightarrow \ol{\cU}$ by
$\phi(x\otimes y)=x\cdot y, \forall x\in \cK, y\in U(\cT)$. Then the restrictions of $\phi$ on $\cK$ and $U(\cT)$ are
well-defined homomorphisms of associative superalgebras. Note that $[\cT, d_0]=[\cT, Q_0]=[\cT, \mathcal{A}]=0$, $\phi$ is well-defined
homomorphism of associative superalgebras. From
\begin{align*}
&\phi(t^i\otimes (t^{-i}\cdot d_i-d_0)+t^id_0\otimes 1)=d_i,\ \phi(\xi Q_i\otimes 1-t^i\otimes (t^{-i}\xi \cdot Q_i-t^{-i}\cdot h_i))=h_i,\\
&\phi(t^i\otimes (t^{-i}\cdot Q_i-Q_0)+t^iQ_0\otimes 1)=Q_i, \ \phi(\xi d_i\otimes 1-t^i\otimes (t^{-i}\xi \cdot d_i-t^{-i}\cdot G_i))=G_i, \\
&\phi(t^i\otimes (t^{-i}\cdot (x\otimes t^i)))=x\otimes t^i, \\
&\phi(t^i\xi\otimes(t^{-i}\cdot (x\otimes t^i))-t^i\otimes (t^{-i}\xi\cdot (x\otimes t^i)-(-1)^{|x|}t^{-i}\cdot (x\otimes t^i\xi)))=(-1)^{|x|}x\otimes t^i\xi,
\end{align*}
we can see that $\phi$ is surjective.

By PBW theorem, we know that $\ol{\cU}$ has a basis consisting of monomials in variables
$\{d_i, h_j, Q_i, G_j, x\otimes t^j, x\otimes t^j\xi\mid i\in \bZ\setminus\{0\}, j\in \bZ, x\in \fg\}$ over $\cK$. Therefore $\ol{\cU}$
has an $\cK$-baisis consisting of monomials in the variables $\{t^{-i}\cdot d_i-d_0,\ t^{-i}\cdot Q_i-Q_0,\ t^{-j}\xi\cdot d_j-t^{-j}\cdot G_j,\
t^{-j}\xi\cdot Q_j-t^{-j}\cdot h_j,\ t^{-j}\cdot (x\otimes t^j),\ t^{-j}\xi\cdot (x\otimes t^j)-(-1)^{|x|}t^{-j}\cdot (x\otimes t^j\xi)\mid i\in \bZ\setminus\{0\}, j\in \bZ, x\in \fg\}$.
So $\phi$ is a injective and hence an isomorphism.
\end{proof}
Since the generators of $\cT$ are complex, we hope to find a simpler expression of $\cT$. Let $\fm$ be the maximal ideal of $\mathcal{A}$ generated by $t-1$ and $\xi$. Then $\fm\Delta$ is
a Lie super subalgebra of $\mathcal{W}=\mathcal{A}\Delta$ (see Section 1). Note that $\fm^{k+1}\Delta$ is spanned by the set
$$\{(t-1)^{k+1}d_i, (t-1)^{k+1}Q_i, (t-1)^kh_i, (t-1)^kG_i\mid i\in \bZ, k\in \bZ_{+}\}$$
and $\fm^k\mathcal{A}$ is spanned by the set
$$\{(t-1)^kt^i,(t-1)^kt^i\xi, (t-1)^{k-1}t^i\xi\mid i\in \bZ, k\in\bZ_{+}\}.$$
By Lemma \ref{L1}, it is easy to get the following lemma.
\begin{lemm}\label{L2}
For $k\in\bZ_+$, let $\fa_k={\fm}^{k+1}\Delta\ltimes(\fg\otimes{\fm}^k\mathcal{A})$. Then
\begin{enumerate}
\item $\fa_0$ is a Lie super subalgebra of $\mathcal{L}$;
\item $\fa_k$ is an ideal of $\fa_0$;
\item $[\fa_1,\fa_k]\subseteq\fa_{k+1}$;
\item $[\fa_0,\fa_0]\supseteq\fa_1$;
\item The ideal of $\fa_0$ generated by $\fm^k\Delta$ contains $\fa_{k}$.
\end{enumerate}
\end{lemm}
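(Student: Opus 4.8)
The plan is to read all five statements off the bracket table of Lemma~\ref{L1} by keeping track of the $\fm$-adic degree. The one normalization to fix at the outset is that the two ``odd'' derivations $h_i=t^i\xi\partial_\xi$ and $G_i=t^{i+1}\xi\partial_t$ already carry a factor $\xi\in\fm$, so that $(t-1)^a h_i$ and $(t-1)^a G_i$ lie in $\fm^{a+1}\Delta$, whereas $(t-1)^a d_i$ and $(t-1)^a Q_i$ lie only in $\fm^a\Delta$; likewise $x\otimes t^i\in\fg\otimes\fm^0\mathcal{A}$ but $x\otimes t^i\xi\in\fg\otimes\fm\mathcal{A}$. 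With this normalization each subspace in the statement is visibly the span of a block of the generators occurring in Lemma~\ref{L1}: $\fa_k$ has $\mathcal{W}$-part sitting in $\fm$-degree $\ge k+1$ and $\fg$-part in $\fm$-degree $\ge k$.

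For (1) I would simply note that $\fm\Delta$ is already known to be a Lie super subalgebra of $\mathcal{W}=\mathcal{A}\Delta$ and that $\fg\otimes\mathcal{A}$ is an ideal of $\mathcal{L}$, whence $\fa_0=\fm\Delta\ltimes(\fg\otimes\mathcal{A})$ is closed under the bracket. For (2) and (3) the crux is a single degree estimate, which can be checked term by term in the table and is explained conceptually by the fact that $t\partial_t$ and $\partial_\xi$ each lower $\fm$-degree by at most one: for $a\ge 1$ and $b\ge 0$ one has $[\fm^a\Delta,\fm^b\Delta]\subseteq\fm^{a+b-1}\Delta$, $\;[\fm^a\Delta,\fg\otimes\fm^b\mathcal{A}]\subseteq\fg\otimes\fm^{a+b-1}\mathcal{A}$, and $[\fg\otimes\fm^a\mathcal{A},\fg\otimes\fm^b\mathcal{A}]\subseteq\fg\otimes\fm^{a+b}\mathcal{A}$ (the last with no loss). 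Feeding the $\fm$-degrees of $\fa_0$ ($\ge 1$ on the $\mathcal{W}$-part, $\ge 0$ on the $\fg$-part) and of $\fa_1$ ($\ge 2$ and $\ge 1$) against those of $\fa_k$ ($\ge k+1$ and $\ge k$) and carrying out the four-way arithmetic gives $[\fa_0,\fa_k]\subseteq\fa_k$ and $[\fa_1,\fa_k]\subseteq\fa_{k+1}$.

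For (4) it suffices to realize each of the six families of spanning vectors of $\fa_1=\fm^2\Delta\oplus(\fg\otimes\fm\mathcal{A})$, namely $(t-1)^2d_n$, $(t-1)^2Q_n$, $(t-1)h_n$, $(t-1)G_n$, $x\otimes(t-1)t^n$, $x\otimes t^n\xi$, as a bracket of two elements of $\fa_0$, and Lemma~\ref{L1} supplies these at once: $[(t-1)d_i,(t-1)d_j]=(j-i)(t-1)^2d_{i+j}$, $[(t-1)h_i,(t-1)Q_j]=-(t-1)^2Q_{i+j}$, $[(t-1)d_i,h_j]=j(t-1)h_{i+j}$, $[(t-1)h_i,G_j]=(t-1)G_{i+j}$, $[(t-1)d_i,x\otimes t^j]=jx\otimes(t-1)t^{i+j}$, and $[h_i,x\otimes t^j\xi]=x\otimes t^{i+j}\xi$; in the cases with a scalar one picks the free index so that the scalar is nonzero while $i+j$ realizes any prescribed integer. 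For (5), writing $I$ for the ideal of $\fa_0$ generated by $\fm^k\Delta$ (with $k\ge 1$), one has $\fm^{k+1}\Delta\subseteq\fm^k\Delta\subseteq I$, so only the $\fg$-part of $\fa_k$ needs attention; it is spanned by $x\otimes(t-1)^kt^n$ and $x\otimes(t-1)^{k-1}t^n\xi$, and these arise as $[(t-1)^kd_i,x\otimes t^j]=jx\otimes(t-1)^kt^{i+j}$ and $[(t-1)^{k-1}h_i,x\otimes t^j\xi]=x\otimes(t-1)^{k-1}t^{i+j}\xi$, where both $(t-1)^kd_i$ and $(t-1)^{k-1}h_i$ lie in $\fm^k\Delta$ (again the $\xi$ in $h_i$ supplying the missing power), so $\fa_k\subseteq I$.

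The argument is not deep; the only thing to watch — the ``main obstacle'' such as it is — is the $\fm$-degree bookkeeping: remembering that the odd generators and the $\xi$-terms start one degree deeper than their visible $(t-1)$-power, and, for the index shifts in Lemma~\ref{L1} (the stray $t^{i+j+1}$ in the $d_i$- and $G_i$-actions), checking that the lower-order correction term still lands in the claimed subspace — which it does, since it sits in $\fm$-degree exactly one below the leading term, precisely where $\fa_k$ or $\fa_{k+1}$ can still absorb it. It is perhaps worth noting that perfectness of $\fg$ is not used anywhere in this lemma; it will be needed only later.
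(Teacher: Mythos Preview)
Your proposal is correct and follows exactly the approach the paper intends: the paper's proof is the single sentence ``By Lemma~\ref{L1}, it is easy to get the following lemma'', and you have supplied precisely the $\fm$-adic degree bookkeeping and explicit brackets from Lemma~\ref{L1} that this sentence is pointing to. Your care with the $\xi$-shift (so that $(t-1)^a h_i,(t-1)^a G_i\in\fm^{a+1}\Delta$) and your explicit witnesses for parts (4) and (5) are exactly what is needed; nothing is missing.
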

\begin{lemm}\label{L5}
(\cite{XL2}, Lemma 3.7)\ $\fm \Delta/\fm^2 \Delta\cong \gl(1,1)$.
\end{lemm}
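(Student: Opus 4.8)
The plan is to compute the Lie superalgebra $\fm\Delta/\fm^2\Delta$ completely explicitly: first exhibit a basis of it, then read off all of its brackets from Lemma \ref{L1}, and finally match these with the defining relations of $\gl(1,1)$.

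First I would recall from the spanning sets displayed just before Lemma \ref{L2} (the cases $k=0$ and $k=1$) that $\fm\Delta$ is spanned by $\{(t-1)d_i,\ (t-1)Q_i,\ h_i,\ G_i\mid i\in\bZ\}$ and $\fm^2\Delta$ by $\{(t-1)^2 d_i,\ (t-1)^2 Q_i,\ (t-1)h_i,\ (t-1)G_i\mid i\in\bZ\}$. From the elementary identities $d_{i+1}-d_i=(t-1)d_i$, $Q_{i+1}-Q_i=(t-1)Q_i$, $h_{i+1}-h_i=(t-1)h_i$ and $G_{i+1}-G_i=(t-1)G_i$ in $\mathcal{A}\Delta$, one gets $(t-1)d_{i+1}-(t-1)d_i=(t-1)^2 d_i\in\fm^2\Delta$ and likewise $(t-1)Q_{i+1}\equiv(t-1)Q_i$, $h_{i+1}\equiv h_i$, $G_{i+1}\equiv G_i$ modulo $\fm^2\Delta$; by induction (in both directions) this collapses the dependence on $i\in\bZ$. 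Hence $\fm\Delta/\fm^2\Delta$ is spanned by the four cosets $\ol{(t-1)d_0}$, $\ol{h_0}$ (even) and $\ol{(t-1)Q_0}$, $\ol{G_0}$ (odd). Since $\mathcal{W}=\mathcal{A}\Delta$ is a free $\mathcal{A}$-module with basis $\{t\tfrac{\ptl}{\ptl t},\tfrac{\ptl}{\ptl\xi}\}$, we have $\fm\Delta/\fm^2\Delta\cong(\fm/\fm^2)\otimes_\bC\Delta$, which is $4$-dimensional, so these four cosets form a basis.

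Next I would compute their brackets by specialising Lemma \ref{L1} to the relevant small values $k,l\in\{0,1\}$ and discarding every term lying in $\fm^2\Delta$ (namely every term carrying an extra factor $(t-1)$). This yields $[\ol{(t-1)d_0},\ol{h_0}]=0$, $[\ol{(t-1)d_0},\ol{(t-1)Q_0}]=\ol{(t-1)Q_0}$, $[\ol{(t-1)d_0},\ol{G_0}]=-\ol{G_0}$, $[\ol{h_0},\ol{(t-1)Q_0}]=-\ol{(t-1)Q_0}$, $[\ol{h_0},\ol{G_0}]=\ol{G_0}$, $[\ol{(t-1)Q_0},\ol{(t-1)Q_0}]=[\ol{G_0},\ol{G_0}]=0$ and $[\ol{(t-1)Q_0},\ol{G_0}]=\ol{(t-1)d_0}+\ol{h_0}$. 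Comparing with the matrix units of $\gl(1,1)$ (even part $\mathrm{span}\{E_{11},E_{22}\}$, odd part $\mathrm{span}\{E_{12},E_{21}\}$, with $[E_{11},E_{12}]=E_{12}$, $[E_{11},E_{21}]=-E_{21}$, $[E_{22},E_{12}]=-E_{12}$, $[E_{22},E_{21}]=E_{21}$, $[E_{12},E_{21}]=E_{11}+E_{22}$ and the remaining brackets zero), the linear map $\ol{(t-1)d_0}\mapsto E_{11}$, $\ol{h_0}\mapsto E_{22}$, $\ol{(t-1)Q_0}\mapsto E_{12}$, $\ol{G_0}\mapsto E_{21}$ is parity-preserving and sends each bracket above to the corresponding one in $\gl(1,1)$; being a bijection on bases, it is the required isomorphism.

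Every computation here is routine; the only points demanding care are the reduction argument — verifying that the dependence on $i$ really collapses modulo $\fm^2\Delta$, so that the quotient is genuinely $4$-dimensional with the stated basis — and then keeping careful track of which of the two terms produced by each bracket in Lemma \ref{L1} survives modulo $\fm^2\Delta$. As the statement indicates, this is Lemma 3.7 of \cite{XL2}, so one may simply invoke that reference; the argument above is recorded for completeness.
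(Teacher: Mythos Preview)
Your proof is correct. The paper itself gives no argument for this lemma at all — it simply quotes it as Lemma~3.7 of \cite{XL2} — so there is nothing to compare beyond noting that you have supplied a complete, self-contained verification where the paper defers to the reference. Your reduction of the spanning set modulo $\fm^2\Delta$, the dimension count via freeness of $\mathcal{W}$ over $\mathcal{A}$, the bracket computations from Lemma~\ref{L1}, and the matching with the matrix units of $\gl(1,1)$ are all accurate.
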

\begin{lemm}\label{L3}
Any finite-dimensional $\fm\Delta$-module is annihilated by $\fm^k\Delta$ for large $k$.
\end{lemm}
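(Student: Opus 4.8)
The plan is to exploit the structure of $\fm\Delta$ revealed by Lemmas \ref{L2} and \ref{L5}. First I would recall that $\fm\Delta$ carries the descending filtration $\fm\Delta\supseteq\fm^2\Delta\supseteq\fm^3\Delta\supseteq\cdots$, where $\fm^k\Delta$ is spanned by $\{(t-1)^k d_i,(t-1)^k Q_i,(t-1)^{k-1}h_i,(t-1)^{k-1}G_i\mid i\in\bZ\}$. By Lemma \ref{L2}(3) applied in the pure-Witt setting (or by direct bracket computation using Lemma \ref{L1}), one has $[\fm^2\Delta,\fm^k\Delta]\subseteq\fm^{k+1}\Delta$, so $\fm^2\Delta$ is a pro-nilpotent-type ideal and $\fm\Delta/\fm^2\Delta\cong\gl(1,1)$ is finite-dimensional. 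The point is that $\fm^k\Delta$, for $k\ge 2$, is generated as an ideal of $\fm\Delta$ by finitely many elements modulo $\fm^{k+1}\Delta$, and successive quotients $\fm^k\Delta/\fm^{k+1}\Delta$ are finite-dimensional.

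Next, let $V$ be a finite-dimensional $\fm\Delta$-module, say $\dim V=n$. The key step is to show that the action of $\fm^2\Delta$ on $V$ is nilpotent in a uniform sense and in fact factors through a finite-dimensional quotient. I would argue as follows: consider the decreasing chain of subspaces $V\supseteq (\fm^2\Delta)V\supseteq(\fm^2\Delta)^2 V\supseteq\cdots$ (here $(\fm^2\Delta)^j V$ means the span of all $j$-fold products of elements of $\fm^2\Delta$ applied to $V$). Because $\dim V=n<\infty$, this chain stabilizes after at most $n$ steps; let $W=(\fm^2\Delta)^n V=(\fm^2\Delta)^{n+1}V=\cdots$. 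Then $W$ is an $\fm\Delta$-submodule (it is visibly $\fm^2\Delta$-stable, and stability under $\fm\Delta$ follows from $[\fm\Delta,\fm^2\Delta]\subseteq\fm^2\Delta$). On $W$ the ideal $\fm^2\Delta$ acts by operators whose images lie in the associative subalgebra they generate in $\mathrm{End}(W)$, and $W=(\fm^2\Delta)W$. I then invoke that $\fm^2\Delta$ is "locally nilpotent" on any finite-dimensional module — concretely, each element $(t-1)^k d_i$ with $k\ge 2$ acts as a nilpotent endomorphism because its weight with respect to the semisimple part of $\gl(1,1)\cong\fm\Delta/\fm^2\Delta$ is nonzero shifting (grading by $i$ pushes outside the finitely many occurring $d_0$-weights), forcing $W=0$ by an Engel/weight-exhaustion argument. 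Hence $(\fm^2\Delta)^N V=0$ for $N=n$.

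Having shown $\fm^2\Delta$ acts nilpotently, I would then bound how deep into the filtration the annihilator reaches. Since $[\fm^2\Delta,\fm^k\Delta]\subseteq\fm^{k+1}\Delta$ and, by Lemma \ref{L2}(5) (in the pure-Witt form), $\fm^k\Delta$ is contained in the ideal generated by a single "bottom layer" together with brackets, repeated bracketing shows $\fm^{k}\Delta\subseteq\sum_{j}(\mathrm{ad}\,\fm^2\Delta)^{j}\big(\text{lower pieces}\big)$; combined with $(\fm^2\Delta)^{n}V=0$ and the finite-dimensionality of each $\fm^k\Delta/\fm^{k+1}\Delta$, one gets that $\fm^k\Delta$ annihilates $V$ once $k$ exceeds an explicit bound like $2n$ (or one can simply say: for $k$ large, every element of $\fm^k\Delta$ is a sum of $n$-fold brackets of elements of $\fm^2\Delta$ with elements of $\fm\Delta$, hence a product of at least $n$ operators from $\fm^2\Delta$ on $V$, hence zero). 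This yields the claim with, e.g., $k=2\dim V+1$.

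The main obstacle I anticipate is the Engel-type step in the second paragraph: rigorously justifying that $\fm^2\Delta$ acts nilpotently on a finite-dimensional module. The clean way is a weight argument — decompose $V$ under the action of the (at most one-dimensional semisimple) torus inside $\fm\Delta/\fm^2\Delta$, note that $V$ has only finitely many weights, observe that the $d_0$-grading of $\fm\Delta$ shifts weights by integers, and conclude that any homogeneous element of $\fm^2\Delta$ of nonzero $d_0$-degree must act nilpotently since iterated application leaves the finite weight support; the degree-zero part of $\fm^2\Delta$ is itself built from brackets raising filtration degree, so it is handled by induction on the filtration. Alternatively, one can pass to the associated graded and invoke that $\bigoplus_{k\ge 2}\fm^k\Delta/\fm^{k+1}\Delta$ is a graded Lie superalgebra with no finite-dimensional faithful modules except the trivial one in each bounded range — but the weight-support argument is the most self-contained, and I would write it out carefully, as everything else is bracket bookkeeping from Lemma \ref{L1}.
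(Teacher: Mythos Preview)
Your proposal has a genuine gap at the step you yourself flagged as the obstacle, and the proposed resolution does not work. You appeal to a ``$d_0$-grading'' of $\fm\Delta$ and a weight-support argument, but $d_0=t\frac{\partial}{\partial t}\notin\fm\Delta$ (since $t\notin\fm$), and $\fm\Delta$ is \emph{not} a $d_0$-graded subalgebra of $\mathcal{W}$: for instance $(t-1)d_{-1}=d_0-d_{-1}\in\fm\Delta$ is not $d_0$-homogeneous, and its homogeneous pieces do not lie in $\fm\Delta$. So there is no decomposition of $\fm^2\Delta$ into $d_0$-weight components, and the sentence ``grading by $i$ pushes outside the finitely many occurring $d_0$-weights'' has no meaning here. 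Moreover, even if you replace $d_0$ by an internal grading operator, you tacitly assume it acts diagonalizably on $V$; nothing forces this for a general finite-dimensional $\fm\Delta$-module, so the ``finitely many weights'' picture collapses.

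The paper fixes exactly these two issues. It works with the internal element $d=(t-1)d_{-1}+\xi Q_0\in\fm\Delta$, whose \emph{adjoint} action on the polynomial part $\fm^+\Delta'$ is genuinely diagonal with positive integer eigenvalues, and it avoids any semisimplicity hypothesis on $V$ by using the characteristic polynomial $f$ of $d$ on $V$: from $[d,d']=ld'$ one gets $f(d-l)\,d'v=d'f(d)v=0$, and for $l$ large enough $\gcd(f(\lambda),f(\lambda-l))=1$, forcing $d'v=0$. This kills $(\fm^+)^{p+1}\Delta'$ on $V$; the Laurent part of $\fm^k\Delta$ (which your sketch also does not address) is then caught by showing the ideal generated by $(\fm^+)^{p+1}\Delta'$ contains $\fm^{p+3}\Delta$. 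If you want to salvage your outline, the minimal repair is to replace the $d_0$-weight argument by this characteristic-polynomial trick for $d$, and then carry out an ideal-generation step to pass from the polynomial part to all of $\fm^k\Delta$.
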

\begin{proof}
Let $V$ be any finite-dimensional $\fm\Delta$-module. Let $\Delta'={\rm span}_\bC\{\frac{\partial}{\partial t}, \frac{\partial}{\partial\xi}\}={\rm span}_\bC
\{d_{-1}, \\
Q_0\}$ and $\mathcal{A}^+=\bC[t,\xi]$. Let $d=(t-1)d_{-1}+\xi Q_0$ and $\fm^+=\fm\cap \mathcal{A}^+$. Then $\fm^+=\oplus_{i=1}^{+\infty}\fm_i^+$
with
$$\fm_i^+=\{x\in \fm^+\mid[d, x]=ix\}=\mbox{span}\{(t-1)^i, (t-1)^{i-1}\xi\}$$
and $\fm^+\Delta'$ is a Lie super subalgebra of $\mathcal{W}=\mathcal{A}\Delta'$. Let $f(\lambda)$ be the characteristic polynomial of $d$ as an operator on $V$.
Then there exists some $p\in \bN$ such that $(f(\lambda-l),f(\lambda))=1$ for all $l\geq p$. Since
\begin{align*}
&[d, (t-1)^{l+1}d_{-1}]=l(t-1)^{l+1}d_{-1},\ \ [d, (t-1)^lh_0]=l(t-1)^lh_0,\\
&[d, (t-1)^{l+1}Q_0]=l(t-1)^{l+1}Q_0, \ \ [d, (t-1)^lG_{-1}]=l(t-1)^lG_{-1},
\end{align*}
and $\fm^+_{l+1}\Delta'=\mbox{span}\{(t-1)^{l+1}d_{-1}, (t-1)^lh_0, (t-1)^{l+1}Q_0, (t-1)^{l}G_{-1}\}$, we have $[d, d']=ld',\forall d'\in \fm^+_{l+1}\Delta'$. So $f(d-l)d'v=d'f(d)v=0$,
which implies that $d'v=0, \forall d'\in \fm^+_{l+1}\Delta',v\in V$. That is $(\fm^+)^{p+1}\Delta' V=0$.

Let $\fb$ be the ideal of $\fm\Delta'=\fm\Delta$ generated by $(\fm^+)^{p+1}\Delta'$. Then $\fb V=0$. For any
$x\in (\fm^+)^{p+1}, y\in \fm^+_1$ and $i\in\bZ$, since $xd\in \fb, yxd\in \fb$ and
$$[yt^id, xd]-[t^id, yxd]=[y,xd]t^id+y[t^id, xd]-[t^id, y]xd-y[t^id, xd]=-2t^iyxd,$$
we have $\fm^{p+2}d\subseteq \fb$. So $$zy\partial=[zd, y\partial]+(-1)^{(|y|+|\partial|)|z|}y[\partial, z]d\in \fb$$
for any $z\in \fm^{p+2}, y\in \fm^+_1$ and $\partial\in \Delta'$. Therefore, $\fm^{p+3}\Delta'\subseteq \fb$. By letting $k=p+3$, we get $\fm^k\Delta V=0$.
\end{proof}

\begin{prop}
The Lie superalgebras $\cT$ and $\fa_0$ are isomorphic.
\end{prop}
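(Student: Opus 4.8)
The plan is to produce an explicit linear isomorphism $\Phi\colon\fa_0\to\cT$ matching the two natural spanning sets, and to verify that it is a homomorphism of Lie superalgebras by a case-by-case bracket computation; since $\Phi$ will visibly carry a basis onto a basis, it is then automatically an isomorphism.

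First I would fix convenient bases on both sides. On $\fa_0=\fm\Delta\ltimes(\fg\otimes\mathcal A)$: the even part of $\fm=(t-1)\mathcal A$ is $(t-1)\bC[t^{\pm1}]$, for which $\{t^i-1\mid i\in\bZ\setminus\{0\}\}$ is a basis, while the odd part is $\bC[t^{\pm1}]\xi$; hence $\fm\Delta=\fm\bigl(t\frac{\partial}{\partial t}\bigr)\oplus\fm\frac{\partial}{\partial\xi}$ has basis $\{d_i-d_0,\ Q_i-Q_0\mid i\ne0\}\cup\{G_i,\ h_i\mid i\in\bZ\}$, via $(t^i-1)t\frac{\partial}{\partial t}=d_i-d_0$, $(t^i-1)\frac{\partial}{\partial\xi}=Q_i-Q_0$, $t^i\xi\,t\frac{\partial}{\partial t}=G_i$ and $t^i\xi\frac{\partial}{\partial\xi}=h_i$; adjoining $\{x\otimes t^i,\ x\otimes t^i\xi\}$ gives a basis of $\fa_0$. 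On $\cT$: writing $T^{d}_i=t^{-i}d_i-d_0$, $T^{Q}_i=t^{-i}Q_i-Q_0$, $T^{G}_i=t^{-i}\xi d_i-t^{-i}G_i$, $T^{h}_i=t^{-i}\xi Q_i-t^{-i}h_i$, $T^{x}_i=t^{-i}(x\otimes t^i)$ and $T^{x,\xi}_i=t^{-i}\xi(x\otimes t^i)-(-1)^{|x|}t^{-i}(x\otimes t^i\xi)$, the $\cK$-basis of $\ol{\cU}$ exhibited in the previous Proposition shows that $\{T^{d}_i,T^{Q}_i\mid i\ne0\}\cup\{T^{G}_i,T^{h}_i,T^{x}_i,T^{x,\xi}_i\mid i\in\bZ\}$ is a basis of $\cT$, with $T^{d}_0=T^{Q}_0=0$ and $T^{x}_0=x\otimes1$.

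I would then define $\Phi$ on basis vectors by $d_i-d_0\mapsto T^{d}_i$, $Q_i-Q_0\mapsto T^{Q}_i$, $G_i\mapsto T^{G}_i$, $h_i\mapsto T^{h}_i$, $x\otimes t^i\mapsto T^{x}_i$ and $x\otimes t^i\xi\mapsto\pm T^{x,\xi}_i$, where the sign, depending on $|x|$, is the one forced by the Koszul rule and is pinned down by the bracket checks. What remains is to verify $\Phi([a,b])=[\Phi a,\Phi b]$ on pairs of basis vectors: the left side is computed from the relations of $\mathcal L$ in Section~2.1 (equivalently from Lemma~\ref{L1} after expanding in powers of $t-1$), and the right side inside $\ol{\cU}$ using that $\mathcal A$ acts associatively, that $\mathcal A$ supercommutes with $\fg\otimes\mathcal A$, and the relations of $\mathcal L$ again. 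For instance the current–current bracket is immediate,
\[
[T^{x}_i,T^{y}_j]=t^{-(i+j)}[x\otimes t^i,y\otimes t^j]=t^{-(i+j)}\bigl([x,y]\otimes t^{i+j}\bigr)=T^{[x,y]}_{i+j}=\Phi([x\otimes t^i,y\otimes t^j]);
\]
a Witt–type bracket unwinds to
\[
[T^{d}_i,T^{d}_j]=(j-i)t^{-(i+j)}d_{i+j}+i\,t^{-i}d_i-j\,t^{-j}d_j=(j-i)T^{d}_{i+j}+i\,T^{d}_i-j\,T^{d}_j,
\]
which is exactly $\Phi\bigl((j-i)(d_{i+j}-d_0)+i(d_i-d_0)-j(d_j-d_0)\bigr)=\Phi([d_i-d_0,d_j-d_0])$ once one reads $T^{d}_0=0=d_0-d_0$; and a mixed bracket such as $[T^{d}_i,T^{x}_j]=j(T^{x}_{i+j}-T^{x}_j)=\Phi([d_i-d_0,x\otimes t^j])$ behaves the same way. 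The remaining pairs — the other $\mathcal W$–$\mathcal W$ brackets, in particular the family $[T^{Q}_i,T^{G}_j]$, which must reproduce the $d_{i+j}+i\,h_{i+j}$ occurring in $[Q_i,G_j]$, and all the odd $\mathcal W$–current and current–current brackets carrying their $(-1)^{|x|}$ factors — are handled by the same kind of one-line computation.

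The only genuine difficulty is organizational: there are on the order of twenty families of pairs of basis vectors to treat, and in several of them the degenerate case $i+j=0$ must be separated out, where $T^{d}_{i+j}$ and $T^{Q}_{i+j}$ collapse to $0$ while $T^{G}_0$, $T^{h}_0$, $T^{x}_0$, $T^{x,\xi}_0$ remain genuine nonzero elements of $\cT$; one must also carry the Koszul signs faithfully through every bracket involving $\xi$, $Q$, $G$, or an odd $x$. Once these conventions are fixed, each identity is short, and $\Phi$, sending a basis to a basis, is the desired isomorphism.
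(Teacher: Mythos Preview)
Your approach is exactly the paper's: write down an explicit linear bijection between the natural bases of $\cT$ and $\fa_0$ and check brackets case by case. The paper goes in the opposite direction, defining $\varphi\colon\cT\to\fa_0$, and then simply asserts that the verification is routine; your proposal is actually more explicit about how that verification would be organized.

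However, the particular map you wrote down is \emph{not} a homomorphism: the assignments $h_i\mapsto T^{h}_i$ and $G_i\mapsto T^{G}_i$ are off by a sign. A single bracket exposes this. In $\fa_0$ one has
\[
[h_i,\,Q_j-Q_0]=-Q_{i+j}+Q_i=-(Q_{i+j}-Q_0)+(Q_i-Q_0),
\]
so under your $\Phi$ this should land on $-T^{Q}_{i+j}+T^{Q}_i$. But a direct computation in $\ol{\cU}$, using $[\cT,\mathcal A]=0$, $[Q_i,Q_j]=0$, $[h_i,Q_j]=-Q_{i+j}$ and $[Q_j,t^{-i}\xi]=t^{j-i}$, gives
\[
[T^{h}_i,T^{Q}_j]=t^{-(i+j)}Q_{i+j}-t^{-i}Q_i=T^{Q}_{i+j}-T^{Q}_i,
\]
the opposite sign. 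The same phenomenon occurs for $G$: one finds $[T^{h}_i,T^{G}_j]=-T^{G}_{i+j}$ while $[h_i,G_j]=G_{i+j}$. The fix is exactly what the paper does: send $h_i\mapsto -T^{h}_i$ and $G_i\mapsto -T^{G}_i$ (equivalently, $\varphi(T^{h}_i)=-h_i$, $\varphi(T^{G}_i)=-G_i$), and then the sign on $x\otimes t^i\xi$ is forced to be $-(-1)^{|x|}$. So you correctly anticipated that a sign must be determined on the last family of generators, but missed that the $h$ and $G$ generators carry signs too; once those are inserted, your verification plan goes through and coincides with the paper's proof.
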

\begin{proof}
Obviously, $\fa_0={\fm}\Delta\ltimes(\fg\otimes \mathcal{A})$ is spanned by the set $\{d_i-d_0, h_i, Q_i-Q_0, G_i, x\otimes t^i, x\otimes t^i\xi\mid i\in\bZ, x\in \fg\}$.
It is easy to verify that the linear map $\varphi: \cT\rightarrow \fa_0$ defined by
\begin{align*}
&\varphi(t^{-i}\cdot d_i-d_0)=d_i-d_0,\ \ \varphi(t^{-i}\xi\cdot Q_i-t^{-i}\cdot h_i)=-h_i,\\
&\varphi(t^{-i}\cdot Q_i-Q_0)=Q_i-Q_0, \ \ \varphi(t^{-i}\xi\cdot d_i-t^{-i}\cdot G_i)=-G_i,\\
&\varphi(t^{-i}\cdot (x\otimes t^i))=x\otimes t^i, \ \ \varphi(t^{-i}\xi\cdot (x\otimes t^i)-(-1)^{|x|}t^{-i}\cdot (x\otimes t^i\xi))=-(-1)^{|x|}x\otimes t^i\xi
\end{align*}
is a Lie superalgebra isomorphism.
\end{proof}

For any $\fa_0$-module $V$, we have the $\mathcal{A}\mathcal{L}$-module $\Gamma(\lambda, V):=(\mathcal{A}(\lambda)\otimes V)^{\varphi_1}$, where
$\varphi_1: \ol{\cU}\stackrel{\phi^{-1}}{\longrightarrow}\cK\otimes U(\cT)\stackrel{1\otimes \varphi}{\longrightarrow} \cK\otimes U(\fa_0)$.
More precisely, $\Gamma(\lambda, V)=\mathcal{A}\otimes V$ with actions
\begin{align*}
&t^i\xi^r\circ(a\otimes v)=t^i\xi^ra\otimes v,\\
&d_i\circ(a\otimes v)=t^ia\otimes(d_i-d_0)\centerdot v+t^i(\lambda a+d_0(a))\otimes v,\\
&h_i\circ(a\otimes v)=t^ia\otimes h_i\centerdot v+\xi(Q_i(a))\otimes v,\\
&Q_i\circ(a\otimes v)=(-1)^{|a|}t^ia\otimes(Q_i-Q_0)\centerdot v+t^i(Q_0(a))\otimes v,\\
&G_i\circ(a\otimes v)=(-1)^{|a|}t^ia\otimes G_i\centerdot v+\xi(\lambda a+d_i(a))\otimes v,\\
&(x\otimes t^i)\circ(a\otimes v)=(-1)^{|a||x|}t^ia\otimes (x\otimes t^i)\centerdot v,\\
&(x\otimes t^i\xi)\circ(a\otimes v)=(-1)^{(|a|+1)|x|}t^i\xi a\otimes (x\otimes t^i)\centerdot v+(-1)^{|a|(|x|+1)}t^{-i}a\otimes (x\otimes t^i\xi)\centerdot v
\end{align*}
for any $a\in \mathcal{A}, v\in V$.
\begin{lemm}\label{L7}
1. For any $\lambda\in \bC$ and any simple $\fa_0$-module $V$, $\Gamma(\lambda, V)$ is a simple weight $\mathcal{A}\mathcal{L}$-module.

2. Any simple weight $\mathcal{A}\mathcal{L}$-module $M$ is isomorphic to some $\Gamma(\lambda, V)$ for some $\lambda\in \supp(M)$ and
some $\fa_0$-module $V$.
\end{lemm}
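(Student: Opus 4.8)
The plan is to exploit the isomorphism $\ol{\cU}\cong \cK\otimes U(\fa_0)$ (obtained by composing the isomorphism $\ol{\cU}\cong\cK\otimes U(\cT)$ of the Proposition with $1\otimes\varphi$, where $\varphi:\cT\xrightarrow{\sim}\fa_0$) together with Lemma \ref{XL2-1}, the tensor-product machinery for modules over $B\otimes B'$ with $B'$ having a countable basis and a strictly simple module. Here one takes $B=U(\fa_0)$ and $B'=\cK$, using that $\cK$ has a countable basis and that each $\mathcal{A}(\lambda)$ is a \emph{strictly} simple $\cK$-module by Lemma \ref{XL2-2}(1).

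For part 1, let $V$ be a simple $\fa_0$-module and put $\Gamma(\lambda,V)=(\mathcal{A}(\lambda)\otimes V)^{\varphi_1}$. As a $\cK\otimes U(\fa_0)$-module (via $\varphi_1$) this is $\mathcal{A}(\lambda)\otimes V$, where $\cK$ acts on the first factor and $U(\fa_0)$ on the second; since $V$ is a simple $U(\fa_0)$-module, Lemma \ref{XL2-1}(2)(3) (with the roles of $B$ and $B'$ chosen so that the strictly simple factor is $\mathcal{A}(\lambda)$) gives that $\mathcal{A}(\lambda)\otimes V$ is a simple $\cK\otimes U(\fa_0)$-module, hence $\Gamma(\lambda,V)$ is a simple $\ol{\cU}$-module, i.e. a simple $\mathcal{A}\mathcal{L}$-module. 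That it is a weight module with $\supp\subseteq\lambda+\bZ$ follows from the displayed $d_0$-action formulas in Section 2.3 together with the explicit formula $d_0\circ(a\otimes v)=d_0(a)\otimes v+\lambda a\otimes v+a\otimes(d_0-d_0)\centerdot v=(d_0+\lambda)(a)\otimes v$ on $\Gamma(\lambda,V)$, so that $t^i\xi^r\otimes v$ has $d_0$-weight $\lambda+i$; each weight space is a sum of copies of $V$ and the module is nonzero. (One should also record that, by construction, $\mathcal{A}$ acts associatively, so $\Gamma(\lambda,V)$ is genuinely an $\mathcal{A}\mathcal{L}$-module and not merely an $\wt{\mathcal{L}}$-module.)

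For part 2, let $M$ be a simple weight $\mathcal{A}\mathcal{L}$-module, viewed as a simple $\ol{\cU}$-module. By Section 2.3, $\supp(M)=\lambda+\bZ$ for some $\lambda\in\bC$. First I would show $M$ contains, as a $\cK$-submodule, a strictly simple $\cK$-module: restrict $M$ to $\cK$, observe it is a weight $\cK$-module (since $d_0\in\cK$ acts diagonalizably), pick a simple $\cK$-subquotient supported in the weight lattice $\lambda+\bZ$, and invoke Lemma \ref{XL2-2}(2) to identify it with some $\mathcal{A}(\lambda')$ up to parity; adjusting $\lambda'$ we may assume $\lambda'\in\supp(M)$. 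The point is to produce inside $M$ an actual $\cK$-submodule isomorphic to $\mathcal{A}(\lambda')$ — here one can use that $\mathcal{A}(\lambda')\cong\cK/\cI_{\lambda'}$ is generated by a single weight vector killed by $\tfrac{\partial}{\partial\xi}$ and on which $d_0$ acts by $\lambda'$, so it suffices to locate such a vector in $M$; a standard argument with the weight grading and the density of $\cK$ yields one. Then, transporting through $\varphi_1$ and applying Lemma \ref{XL2-1}(2)(4) — the simple $\cK\otimes U(\fa_0)$-module $M$ contains a strictly simple $\cK=\cK\otimes\bC$-module $\mathcal{A}(\lambda')$ — we conclude $M\cong \mathcal{A}(\lambda')\otimes V$ for some simple $U(\fa_0)$-module $V$, i.e. $M\cong\Gamma(\lambda',V)$.

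The main obstacle is the step in part 2 of actually exhibiting a strictly simple $\cK$-submodule of $M$ (as opposed to a subquotient): one must argue that some weight vector of $M$ generates a $\cK$-submodule on which $\tfrac{\partial}{\partial\xi}$ acts as $0$ and $d_0$ acts as a scalar, so that Lemma \ref{XL2-2}(1)–(2) forces it to be isomorphic to an $\mathcal{A}(\lambda')$. This uses the simplicity of $M$ as an $\ol{\cU}$-module to guarantee such a vector exists — typically by taking a lowest-weight or suitably extremal vector and checking the relevant annihilation conditions using the bracket relations of Section 2.1 — and it is the place where a genuine calculation is needed; the rest is bookkeeping with Lemma \ref{XL2-1}.
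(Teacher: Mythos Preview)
Your outline matches the paper's proof: both parts go through the isomorphism $\ol{\cU}\cong\cK\otimes U(\fa_0)$ and Lemma~\ref{XL2-1}, with strict simplicity of $\mathcal{A}(\lambda)$ supplied by Lemma~\ref{XL2-2}.

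However, you overcomplicate the step you flag as ``the main obstacle'' in part~2. There is no need for subquotients, extremal vectors, or any calculation with the bracket relations. The point is that $\big(\frac{\partial}{\partial\xi}\big)^2=0$ in $\cK$, so $\frac{\partial}{\partial\xi}$ acts nilpotently on every vector. Concretely: pick any $\lambda\in\supp(M)$ and any nonzero homogeneous $v\in M_\lambda$; then $V'=\bC[\frac{\partial}{\partial\xi}]v$ is at most two-dimensional and contains a nonzero homogeneous $v'$ with $\frac{\partial}{\partial\xi}v'=0$. Since $[d_0,\frac{\partial}{\partial\xi}]=0$, this $v'$ still has weight $\lambda$, so $\cI_\lambda v'=0$ and $\cK v'$ is a cyclic $\cK$-module isomorphic to $\mathcal{A}(\lambda)$ or $\Pi(\mathcal{A}(\lambda))$. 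That is the entire argument.

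You also omit the parity case. If $\cK v'\cong\Pi(\mathcal{A}(\lambda))$, then Lemma~\ref{XL2-1}(2)(4) gives $M\cong\Pi(\mathcal{A}(\lambda))\otimes V$ for some simple $U(\fa_0)$-module $V$; one then uses Lemma~\ref{XL2-1}(1), namely $\Pi(\mathcal{A}(\lambda))\otimes V\cong \mathcal{A}(\lambda)\otimes\Pi(V^T)$, to put the answer in the form $\Gamma(\lambda,V')$ with $V'=\Pi(V^T)$. Without this remark the statement of part~2 is not fully established.
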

\begin{proof}
1. From Lemmas \ref{XL2-1} and \ref{XL2-2}, we know that $\mathcal{A}(\lambda)\otimes V$ is a simple $\cK\otimes U(\cT)$-module for any
$\lambda\in \bC$ and any simple $\fa_0$-module $V$. From the definition of $\Gamma(\lambda, V)$, we have the first statement.

2. Let $M$ be any simple weight $\mathcal{A}\mathcal{L}$-module with $\lambda\in \supp(M)$. Then $M^{\varphi_1^{-1}}$ is a simple $\cK\otimes U(\fa_0)$-module.
Fix a nonzero homogeneous element $v\in (M^{\varphi_1^{-1}})_\lambda$. Since $V'=\bC[\frac{\partial}{\partial\xi}]v$ is a finite-dimensional
super subspace with $\frac{\partial}{\partial\xi}$ acting nilpotently, we may find a nonzero homogeneous element $v'\in V'$ with $\cI_\lambda v'=0$.
Clearly, $\cK v'$ is isomorphic to $\mathcal{A}(\lambda)$ or $\Pi(\mathcal{A}(\lambda))$. From Lemma \ref{XL2-1}, there exists a simple $U(\fa_0)$-module $V$ such
that $M^{\varphi_1^{-1}}\cong \mathcal{A}(\lambda)\otimes V$ or $M^{\varphi_1^{-1}}\cong \Pi(\mathcal{A}(\lambda))\otimes V$. Furthermore, there is
$\Pi(\mathcal{A}(\lambda))\otimes V\cong \mathcal{A}(\lambda)\otimes \Pi(V^T)$ by Lemma \ref{XL2-2}. So this conclusion holds.
\end{proof}

Now, to classify all simple weight $\mathcal{A}\mathcal{L}$-modules, it suffices to classify all simple $\fa_0$-modules. In particular, to classify all simple
cuspidal $\mathcal{A}\mathcal{L}$-modules, it suffices to classify all simple finite-dimensional $\fa_0$-modules. Now we introduce a known conclusion that we are going to use.
\begin{lemm}\label{L4}
(\cite{Mo}, Theorem 2.1, Engel's Theorem for Lie superalgebras) Let $V$ be a finite-dimensional module for the Lie superalgebra $L=L_{\bar{0}}\oplus L_{\bar{1}}$
such that the elements of $L_{\bar{0}}$ and $L_{\bar{1}}$ respectively are nilpotent endmorphisms of $V$. Then there exists a nonzero element $v\in V$
such that $xv=0$ for all $x\in L$.
\end{lemm}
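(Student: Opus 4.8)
The plan is to carry out the classical inductive proof of Engel's theorem in the $\bZ_2$-graded setting. First I would reduce to the faithful case: replacing $L$ by its image in $\gl(V)$, I may assume that $L$ is a $\bZ_2$-graded subalgebra of $\gl(V)$ all of whose homogeneous elements are nilpotent operators and that $V\neq 0$, and then induct on $\dim L$. The engine of the proof is the super analogue of the usual auxiliary lemma: \emph{if $x\in\gl(V)$ is homogeneous and nilpotent, then $\mathrm{ad}\,x$ is a nilpotent operator on $\gl(V)$.} For even $x$ this is immediate, since $\mathrm{ad}\,x$ is the difference of the two commuting operators $y\mapsto xy$ and $y\mapsto yx$, both nilpotent when $x$ is. For odd $x$ a short computation with the signs gives $(\mathrm{ad}\,x)^2(y)=x^2y-yx^2$, so $(\mathrm{ad}\,x)^2$ is again a difference of commuting nilpotent operators (as $x^2$ is nilpotent) and hence $\mathrm{ad}\,x$ is nilpotent.

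The base case $\dim L=0$ is trivial since $V\neq 0$. For the inductive step I would fix a maximal proper $\bZ_2$-graded subalgebra $K\subsetneq L$, which exists by finite-dimensionality. Then $K$ acts on the finite-dimensional space $L/K$ via $\mathrm{ad}$, and by the auxiliary lemma this action is by nilpotent operators; since $\dim K<\dim L$, the induction hypothesis applied to the graded $K$-module $L/K$ (the $K$-fixed subspace of a graded module being graded) yields a nonzero \emph{homogeneous} $\bar x\in L/K$ with $\mathrm{ad}(K)\bar x=0$, i.e.\ a homogeneous $x\in L\setminus K$ with $[K,x]\subseteq K$. Using the graded Jacobi identity one checks $[x,[x,x]]=0$ and $[K,[x,x]]\subseteq K$, so $K+\bC x+\bC[x,x]$ (which is just $K+\bC x$ when $x$ is even) is a $\bZ_2$-graded subalgebra strictly containing $K$; by maximality it equals $L$, and then $[K,L]\subseteq[K,K]+[K,x]+[K,[x,x]]\subseteq K$ shows that $K$ is an ideal of $L$.

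To finish, set $W=\{v\in V\mid Kv=0\}$. By the induction hypothesis applied to $K$ we get $W\neq 0$, and $W$ is $L$-stable because $K$ is an ideal: for homogeneous $y\in K$, $z\in L$ and $v\in W$ one has $y(zv)=(-1)^{|y||z|}z(yv)+[y,z]v=0$ since $z(yv)=0$ and $[y,z]\in K$. Now $x$ restricts to a nilpotent operator on $W\neq 0$, so there is $0\neq v\in W$ with $xv=0$; then $Kv=0$, $xv=0$ and $[x,x]v=0$ (trivially for even $x$, and $=2x^2v=0$ for odd $x$), whence $Lv=(K+\bC x+\bC[x,x])v=0$, as required. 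The one genuinely super point, and the main obstacle, is the inductive step: unlike in the Lie-algebra case $K+\bC x$ need not be a subalgebra when $x$ is odd, so one must enlarge by $[x,x]$ and verify closure via the graded Jacobi identity; the other place where parity intervenes is the identity $(\mathrm{ad}\,x)^2(y)=x^2y-yx^2$ in the auxiliary lemma.
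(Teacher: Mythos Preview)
Your proof is correct and complete; it is the standard adaptation of the classical inductive proof of Engel's theorem to the $\bZ_2$-graded setting, with the two genuinely super ingredients (the identity $(\mathrm{ad}\,x)^2(y)=x^2y-yx^2$ for odd $x$, and the enlargement $K+\bC x+\bC[x,x]$ in place of $K+\bC x$) handled correctly. Note, however, that the paper does not give its own proof of this lemma at all: it is simply quoted from \cite{Mo} (Theorem~2.1) as a known result, so there is no argument in the paper to compare yours against. Your write-up therefore supplies a proof where the paper only supplies a citation.
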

\begin{lemm}\label{L6}
1. Let $V$ be any finite-dimensional $\fa_0$-module. Then there exists $k\in \bN$ such that $\fa_kV=0$.

2. Let $V$ be any simple finite-dimensional  $\fa_0$-module. Then $\fa_1V=0$.
\end{lemm}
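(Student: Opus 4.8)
The plan is to prove the two statements of Lemma \ref{L6} in order, using the structural facts about the tower $\fa_0\supseteq\fa_1\supseteq\fa_2\supseteq\cdots$ collected in Lemma \ref{L2} together with the nilpotency results Lemma \ref{L3} and Lemma \ref{L4}.

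\medskip
\noindent\textbf{Part 1.}
First I would produce a single integer $k$ with $\fa_k V=0$ for an arbitrary finite-dimensional $\fa_0$-module $V$. The natural route is to restrict $V$ to the subalgebra $\fm\Delta$ of $\fa_0$: by Lemma \ref{L3} there is $m\in\bN$ with $\fm^m\Delta\, V=0$. By Lemma \ref{L2}(5) the ideal of $\fa_0$ generated by $\fm^m\Delta$ contains $\fa_m$, but this ideal need not kill $V$ since $\fa_0$ itself acts nontrivially; so instead I would argue as follows. Set $W=\fm^m\Delta$; then $W\,V=0$. Consider the action of $\fg\otimes\mathcal A$ — more precisely of the Lie superalgebra $\fg\otimes\fm^m\mathcal A$, which together with $\fm^{m+1}\Delta$ spans $\fa_m$. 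Using the brackets of Lemma \ref{L1} one sees that each element of $\fg\otimes\fm^{m}\mathcal A$ can be written (after enlarging $m$ by a bounded amount, using $\fg=[\fg,\fg]$) as a bracket of an element of $\fm\Delta$-type operators already annihilating $V$ with another element of $\fa_0$; more robustly, one applies Lemma \ref{L4}: the radical-type subalgebra $\fa_m$ for $m$ large acts on $V$ by operators that are simultaneously nilpotent — indeed each $d_0$-eigenvalue shift forces $\fa_{\pm n}$ for large $n$ out of the finite support of $V$, so every element of $\fa_m$ of nonzero $\bZ$-degree acts nilpotently, and the finitely many degree-zero elements of $\fa_m$ are controlled by Lemma \ref{L3}. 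Then Lemma \ref{L4} yields a nonzero common annihilated vector, and iterating (or passing to $V/\fa_m V$ and using that $[\fa_1,\fa_m]\subseteq\fa_{m+1}$ from Lemma \ref{L2}(3)) gives $\fa_k V=0$ for some $k$. I expect the cleanest version is: since $V$ is finite dimensional and $\supp(V)$ is finite, for $n$ large enough all of $\fa_n$ shifts weights outside $\supp(V)$ except in degree $0$, where Lemma \ref{L3} applies; combine to get a $k$ with $\fa_k$ acting nilpotently, then feed that into Lemma \ref{L4} and a short induction up the quotients $\fa_j V/\fa_{j+1}V$.

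\medskip
\noindent\textbf{Part 2.}
Given Part 1, for a simple finite-dimensional $\fa_0$-module $V$ there is $k$ with $\fa_k V=0$; I would take $k$ minimal and show $k\le 1$. Suppose $k\ge 2$. Then $\fa_{k-1}V\ne 0$, but $\fa_{k-1}V$ is an $\fa_0$-submodule of $V$: this is because $\fa_{k-1}$ is an ideal of $\fa_0$ (Lemma \ref{L2}(2)), so $\fa_0\cdot(\fa_{k-1}V)\subseteq \fa_{k-1}V+[\fa_0,\fa_{k-1}]V\subseteq \fa_{k-1}V$. By simplicity $\fa_{k-1}V=V$. Now invoke Lemma \ref{L2}(3): $[\fa_1,\fa_{k-1}]\subseteq\fa_k$, which annihilates $V$; hence $\fa_1$ acts on $V=\fa_{k-1}V$ by
\[
\fa_1\cdot(\fa_{k-1}V)\subseteq [\fa_1,\fa_{k-1}]V+\fa_{k-1}(\fa_1 V)=\fa_{k-1}(\fa_1 V)\subseteq \fa_{k-1}V,
\]
and more to the point, iterating this shows $\fa_1 V=\fa_1\fa_{k-1}^{\,r}V$ for all $r$, which combined with $\fa_{k-1}^{\,r}\subseteq \fa_{r(k-1)}$ (again Lemma \ref{L2}(3), since $k-1\ge 1$) and $\fa_k V=0$ forces $\fa_1 V=0$ once $r(k-1)\ge k$, i.e. once $r\ge 2$. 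Thus already $k\le 1$ after relabeling, a contradiction with minimality $k\ge 2$; hence $\fa_1 V=0$.

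\medskip
\noindent\textbf{Main obstacle.}
The delicate point is Part 1: turning the $\fm\Delta$-level nilpotency of Lemma \ref{L3} into nilpotency of a whole $\fa_k$, because $\fa_0$ has the non-nilpotent piece $\fg\otimes\mathcal A$ carrying the genuine representation-theoretic content. The device that makes it go through is the $\bZ$-grading by $d_0$-eigenvalue: finite-dimensionality of $V$ bounds $\supp(V)$, so all sufficiently-high-degree elements of $\fa_0$ act nilpotently for free, and only the degree-zero slice — where Lemma \ref{L3}, Lemma \ref{L4} and $\fg=[\fg,\fg]$ do the work — needs care. Once the nilpotency of some $\fa_k$ is in hand, Part 2 is the standard ``minimal annihilating ideal in the derived-series tower equals $\fa_1$'' argument powered by the commutator containments in Lemma \ref{L2}.
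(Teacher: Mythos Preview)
Your worry that ``this ideal need not kill $V$ since $\fa_0$ itself acts nontrivially'' is unfounded: the annihilator $\ann_{\fa_0}(V)=\{x\in\fa_0\mid xV=0\}$ is always an ideal of $\fa_0$ (if $xV=0$ and $y\in\fa_0$ then $[y,x]v=y(xv)\pm x(yv)=0$, since $yv\in V$ and $x$ kills $V$). So once Lemma~\ref{L3} gives $\fm^m\Delta\subseteq\ann(V)$, the ideal of $\fa_0$ generated by $\fm^m\Delta$ also lies in $\ann(V)$, and by Lemma~\ref{L2}(5) that ideal contains $\fa_m$; hence $\fa_mV=0$. This is exactly the paper's two-line argument. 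Your alternative via a $d_0$-grading cannot work in any case: $d_0\notin\fa_0$, so an abstract finite-dimensional $\fa_0$-module carries no weight decomposition, and the phrase ``$\supp(V)$ is finite'' has no meaning here.

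\textbf{Part 2.} The step ``$\fa_{k-1}^{\,r}\subseteq\fa_{r(k-1)}$'' is where your argument breaks. If $\fa_{k-1}^{\,r}$ means the $r$-fold product in $U(\fa_0)$ --- which is what you need, since the equality $\fa_1V=\fa_1\fa_{k-1}^{\,r}V$ comes from iterating $\fa_{k-1}V=V$ in the enveloping-algebra sense --- then the containment is simply false (for instance $y^2$ with $y\in\fa_{k-1}$ lies in no $\fa_j$). If instead it means iterated Lie brackets, then Lemma~\ref{L2}(3) does give such a containment, but now $\fa_{k-1}^{\,r}V=V$ no longer follows from $\fa_{k-1}V=V$. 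Either reading leaves the deduction $\fa_1V=0$ unjustified. The paper's route is different: from Part~1 and the relations $[\fa_1,\fa_j]\subseteq\fa_{j+1}$ of Lemma~\ref{L2}(3) one gets that, modulo $\ann(V)\supseteq\fa_k$, the homogeneous elements of $\fa_1$ act nilpotently on $V$; Engel's theorem (Lemma~\ref{L4}) then produces a nonzero vector annihilated by $\fa_1$, and the set of such vectors is an $\fa_0$-submodule (because $\fa_1$ is an ideal of $\fa_0$), hence equals $V$ by simplicity.
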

\begin{proof}
1. Let $V$ be any finite-dimensional $\fa_0$-module. Then $V$ is a finite-dimensional $\fm\Delta$-module. So the first statement follows from
Lemmas \ref{L2} and \ref{L3}.

2. Let $V$ be any simple finite-dimensional $\fa_0$-module. Then $V$ is a simple finite-dimensional  $\fa_0/\mbox{ann}(V)$-module, where
$\mbox{ann}(V)$ is the ideal of $\fa_0$ that annihilates $V$ and $\fa_k\subseteq \mbox{ann}(V)$ for some $k\in\bN$. So $V$ is a finite-dimensional
module for $(\fa_0)_{\bar{0}}+\mbox{ann}(V)$. By Lemma \ref{L2}, we have
$$((\fa_1)_{\bar{0}}+\mbox{ann}(V))^{k-1}\subseteq (\fa_k)_{\bar{0}}+\mbox{ann}(V)=\mbox{ann}(V),$$
which implies that $(\fa_1)_{\bar{0}}+\mbox{ann}(V)$ acts nilpotently on $V$. Since $[x, x]\in (\fa_1)_{\bar{0}}+\mbox{ann}(V)$ for any
$x\in (\fa_1)_{\bar{1}}+\mbox{ann}(V)$, every element in $(\fa_1)_{\bar{1}}+\mbox{ann}(V)$ acts nilpotently on $V$. Hence, by Lemma \ref{L4}, there is
a nonzero element $v\in V$ annihilated by $\fa_1+\mbox{ann}(V)$.

Let $V'=\{v\in V|xv=0,\forall x\in \fa_1\}$. So $V'\neq \emptyset$. For any $y\in \fa_0, x\in \fa_1$ and $v\in V'$, there is
$xyv=(-1)^{|x||y|}yxv+[x, y]v=0$, which implies that $yv\in V'$. Hence $V'=V$ by the simplicity of $V$. And therefore $\fa_1V=0$.
\end{proof}
Let $V$ be any simple finite-dimensional  $\fa_0$-module. From Lemmas \ref{L5} and \ref{L6}, we know that $V$ is a simple finite-dimensional  module for $\fa_0/\fa_1\cong \gl(1,1)\oplus \fg$. So, to classify all simple cuspidal $\mathcal{A}\mathcal{L}$-modules, it suffices to classify all simple finite-dimensional  module for $\gl(1,1)\oplus \fg$.

\section{Cuspidal modules}
In this section, we will classify all simple cuspidal modules for $\mathcal{L}$ by using the $\mathcal{A}$-cover theory.
Let $I=\fg\otimes \mathcal{A}$ and $\fri=\delta_{IM,0}\mathcal{W}+(1-\delta_{IM,0})I$. Consider $\mathcal{L}$ as the adjoint $\mathcal{L}$-module.
For an $\mathcal{L}$-module $M$, we can make the tensor product $\mathcal{L}$-module
$\fri\otimes M$ into an $\mathcal{A}\mathcal{L}$-module by defining
$$ a \cdot (\omega \otimes v)=(a\omega)\otimes v, \forall a\in \mathcal{A}, \omega\in \fri. $$
Denote $K(M)=\{ \sum\limits_{i=1}^k \omega_i\otimes v_i\in \fri\otimes M\mid\sum\limits_{i=1}^k(a\omega_i) v_i=0,\forall a\in \mathcal{A}\}$. Then it is easy to see that
$K(M)$ is an $\mathcal{A}\mathcal{L}$-submodule of $\fri\otimes M$. Then we have the $\mathcal{A}\mathcal{L}$-module $\widehat{M}=(\fri\otimes M)/K(M)$. As in \cite{BF},  we call $\widehat{M}$ as the cover of $M$ if $\fri M=M$.

Clearly,  the linear map
\begin{equation}\label{cover}\begin{split}
\pi: \,\,\widehat{M}  &\to\ \ \fri M,\\
         w\otimes y+K(M)\ & \mapsto\ \  wy,\quad \forall\ w\in \fri, y\in M
\end{split}\end{equation}
is an $\mathcal{L}$-module epimorphism.

Recall that in \cite{BF}, the authors show that every cuspidal $\mathfrak{W}$-module is annihilated by the operators $\Omega_{k,s}^{(m)}$ for $m$ large enough.

\begin{lemm}\label{Omegaoper}(\cite{BF}, Corollary 3.7)
For every $l\in\bN$ there exists $m\in\bN$ such that for all $k, s\in\bZ$ the differentiators $\Omega_{k, s}^{(m)}=\sum\limits_{i=0}^m(-1)^i\binom{m}{i}d_{k-i}d_{s+i}$ annihilate every cuspidal $\mathfrak{W}$-module with a composition series of length $l$.
\end{lemm}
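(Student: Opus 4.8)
The statement to be proved is Lemma~\ref{Omegaoper}, which is cited from \cite{BF}. Since this is quoted rather than proved in the paper, I will sketch how one would reconstruct the argument from the Billig--Futorny machinery.

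\medskip

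The plan is to reduce the claim to the case of a single cuspidal module (i.e. $l=1$), and then handle the composition series by induction. First I would recall that a cuspidal $\mathfrak{W}$-module $M$ has uniformly bounded weight multiplicities, say $\dim M_\mu\le N$ for all $\mu$. The key structural input is that on such a module the generating function $d(z)=\sum_{n\in\bZ}d_n z^{-n-1}$ satisfies a polynomial-type differential relation: because the weight spaces are bounded, the associated vertex-algebraic object (the $\mathfrak{W}$-action extends to an action of a quotient of the vertex algebra attached to $\mathfrak{W}$, or concretely one works inside $\mathrm{End}(M)[[z^{\pm1}]]$) forces the normally-ordered square $:d(z)d(w):$, expanded along the diagonal, to be a polynomial of bounded degree in $(z-w)$. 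Concretely, I would show that for fixed $k,s$ the family of operators $\Omega_{k,s}^{(m)}=\sum_{i=0}^m(-1)^i\binom{m}{i}d_{k-i}d_{s+i}$, viewed as $m$ varies, are the successive finite differences of the two-variable sequence $(p,q)\mapsto d_{k-p}d_{s+q}$ restricted to $p+q$ fixed; a sequence that is ``polynomial of degree $<m$'' in the appropriate sense is killed by the $m$-th finite difference.

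\medskip

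So the technical heart is: \emph{on a cuspidal module the matrix coefficients of $d_{k-i}d_{s+i}$ depend polynomially on $i$, of degree bounded in terms of $N$ only.} I would establish this by the standard Billig--Futorny device of applying $d_a d_b$ to a weight vector $v\in M_\mu$ and observing that $d_{k-i}d_{s+i}v$ lives in the fixed finite-dimensional space $M_{\mu+k+s}$, whose dimension is $\le N$; then using the commutation relations $[d_a,d_b]=(b-a)d_{a+b}$ one expresses $d_{k-i}d_{s+i}v$ in terms of finitely many ``basic'' operators acting on $v$, with coefficients that are polynomials in $i$ of controlled degree. This is exactly the computation behind Corollary~3.7 in \cite{BF} (built on their Lemma~3.4--3.6); the bound on $m$ comes out as an explicit function of $N$, hence of the composition length $l$ once we know each cuspidal simple has multiplicities bounded by a constant times $l$ is not quite right — rather, one first proves it for simple cuspidal modules with the bound depending on their (bounded) multiplicity, and then:

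\medskip

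For the induction on $l$, suppose $0\to M'\to M\to M''\to 0$ with $M'$ cuspidal and $M''$ cuspidal of shorter length. Choose $m'$ working for $M'$ and $m''$ for $M''$. For $w\in M$, $\Omega_{k,s}^{(m'')}w\in M'$ since it vanishes in $M''$; but $\Omega_{k,s}^{(m'')}$ need not be a single $d_ad_b$, so one cannot directly apply $\Omega^{(m')}$ afterward. The correct move (and this is the genuinely delicate point) is to use that the $\Omega$-operators behave well under composition of differences: $\Omega_{k,s}^{(m'+m'')}$ can be rewritten, via the binomial/Vandermonde identity, as a $\bZ$-linear combination $\sum_j c_j\,\Omega_{k',s'}^{(m')}$ where each summand, applied to $M$, already lands in $M'$ after a prior $\Omega^{(m'')}$-factor — more precisely one shows $\Omega_{k,s}^{(m'+m'')}=\sum$ of products, each containing an $\Omega^{(m'')}$ that sends $M$ into $M'$ and an $\Omega^{(m')}$ that then annihilates $M'$. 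Taking $m=m'+m''$ finishes the step, and iterating over the composition series gives the bound $m$ in terms of $l$.

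\medskip

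I expect the main obstacle to be precisely this last combinatorial bookkeeping: proving cleanly that a high-order $\Omega$-differentiator factors through lower-order ones in a way compatible with a two-step extension, so that the single-module estimate propagates up the composition series without the multiplicity bound exploding uncontrollably. The single-module polynomiality estimate itself, while computational, is routine given the $\mathfrak{W}$ relations and uniform boundedness; it is the passage from simple to finite-length that requires care, and it is exactly the content that \cite{BF}, Corollary~3.7 packages.
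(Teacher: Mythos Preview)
The paper does not give a proof of this lemma at all; it is simply quoted from \cite{BF}. So there is no ``paper's proof'' to compare against, and your decision to sketch the Billig--Futorny argument is reasonable. That said, your sketch contains a genuine gap.

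Your induction step does not work as written. You claim that via Vandermonde one can write $\Omega^{(m'+m'')}_{k,s}$ as a sum of products, each factor being an $\Omega^{(m'')}$ (sending $M$ into $M'$) followed by an $\Omega^{(m')}$ (killing $M'$). But every $\Omega^{(m)}_{k,s}$ lies in the quadratic part of $U(\mathfrak{W})$, while a product $\Omega^{(m')}\Omega^{(m'')}$ is quartic; no identity of the shape you describe can hold. What Vandermonde actually gives is
\[
\Omega^{(m'+m'')}_{k,s}=\sum_{j=0}^{m''}(-1)^j\binom{m''}{j}\,\Omega^{(m')}_{k-j,\,s+j},
\]
a linear combination of $\Omega^{(m')}$'s, not of products. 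This only shows the triviality that if all $\Omega^{(m')}$ vanish on $M$ then so does $\Omega^{(m'+m'')}$; it says nothing about passing through an extension $0\to M'\to M\to M''\to 0$. What one does get easily from the filtration is that any \emph{product} $\Omega^{(m_0)}_{k_1,s_1}\cdots\Omega^{(m_0)}_{k_l,s_l}$ annihilates $M$; the nontrivial content of the Billig--Futorny result is precisely the passage from this to the vanishing of a \emph{single} quadratic differentiator $\Omega^{(m)}$, and your sketch does not supply that step.

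Your treatment of the base case is also shakier than it needs to be. You propose to deduce polynomiality of $i\mapsto d_{k-i}d_{s+i}v$ directly from the uniform bound on weight multiplicities together with the bracket relations, but the intermediate space $M_{\mu+s+i}$ varies with $i$, so there is no fixed finite-dimensional target in which to run a linear-dependence argument; boundedness alone does not give polynomiality. The clean route for $\mathfrak{W}$ is to invoke Mathieu's classification: every simple cuspidal $\mathfrak{W}$-module is an intermediate series module, on which $d_{k-i}d_{s+i}v_n$ is visibly a degree-$2$ polynomial in $i$, so already $\Omega^{(3)}$ vanishes. You should say this explicitly rather than gesture at a boundedness argument that does not close.
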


For $\mathcal{L}=\mathcal{W}\ltimes(\fg\otimes \mathcal{A})$, we also want to find some operators belonging to $U(\mathcal{L})$ that can
annihilate a given cuspidal $\mathcal{L}$-module $M$. Obviously, $M$ is a cuspidal $\mathfrak{W}$-module and hence there exists $m\in\bN$ such that $\Omega_{k,p}^{(m)}M=0, \forall k,p\in\bZ$. Therefore, $[\Omega_{k,p}^{(m)},x\otimes t^j]M=0, \forall j,k,p\in\bZ, x\in \fg$. From Lemma 4.4 in {\cite{CLW}}, the authors show that $$\sum\limits_{i=0}^{m+2}(-1)^i\binom{m+2}{i}x\otimes t^{j+k+1-i}d_{p-1+i}=0.$$
Similarly, from $[\Omega_{k,p}^{(m)},x\otimes t^j\xi]M=0$ and $[d_i,x\otimes t^k\xi]=kx\otimes t^{i+k}\xi, \forall i,j,k,p\in\bZ, x\in \fg$, we have
\begin{align*}
&\sum\limits_{i=0}^{m+2}(-1)^i\binom{m+2}{i}x\otimes t^{j+k+1-i}\xi d_{p-1+i}=0.
\end{align*}
Thus, we have the following lemma.
\begin{lemm}\label{omega}
Let $M$ be a cuspidal module over $\mathcal{L}$. Then there exists $m\in\bN$ such that for all $j, p\in\bZ$ and $x\in \fg$, the operators
$\sum\limits_{i=0}^{m}(-1)^i\binom{m}{i}yd_{p+i}$ annihilate $M$, where $y\in \{x\otimes t^{j-i}, x\otimes t^{j-i}\xi \}.$
\end{lemm}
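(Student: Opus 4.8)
The plan is to transfer the annihilation statement for cuspidal $\mathfrak{W}$-modules from Lemma~\ref{Omegaoper} to the super affine-Virasoro setting via the action of the differentiators $\Omega_{k,p}^{(m)}$ on the generators $x\otimes t^j$ and $x\otimes t^j\xi$ of $\fg\otimes\mathcal{A}$, exactly as sketched in the paragraph preceding the statement. First I would fix a cuspidal $\mathcal{L}$-module $M$. Restricting to the Witt subalgebra $\mathfrak{W}={\rm span}_\bC\{d_i\}$, $M$ is a cuspidal $\mathfrak{W}$-module, and since every cuspidal module has a finite composition series, Lemma~\ref{Omegaoper} gives an $m\in\bN$ with $\Omega_{k,p}^{(m)}M=0$ for all $k,p\in\bZ$. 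Consequently $[\Omega_{k,p}^{(m)},\,y]M=0$ for every $y\in\fg\otimes\mathcal{A}$.

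Next I would compute this commutator explicitly. Using $[d_i,x\otimes t^j]=j\,x\otimes t^{i+j}$ and $[d_i,x\otimes t^j\xi]=j\,x\otimes t^{i+j}\xi$ from Theorem~\ref{2-cocycle}, one expands
$$[\Omega_{k,p}^{(m)},x\otimes t^j]=\sum_{i=0}^m(-1)^i\binom{m}{i}\Bigl([d_{k-i},x\otimes t^j]d_{p+i}+d_{k-i}[d_{p+i},x\otimes t^j]\Bigr),$$
which is a combination of terms $x\otimes t^{j+k-i}d_{p+i}$ and $x\otimes t^{j+p+i}d_{k-i}$; after reindexing and a Vandermonde-type binomial manipulation (this is precisely the computation carried out in \cite[Lemma~4.4]{CLW}) one obtains $\sum_{i=0}^{m+2}(-1)^i\binom{m+2}{i}x\otimes t^{j+k+1-i}d_{p-1+i}=0$ on $M$. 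The identical bracket relation for the odd generators $x\otimes t^j\xi$ yields the parallel identity $\sum_{i=0}^{m+2}(-1)^i\binom{m+2}{i}x\otimes t^{j+k+1-i}\xi\,d_{p-1+i}=0$ on $M$.

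Finally I would absorb the constants: replacing $m+2$ by a new $m$, relabelling the exponent shifts $j+k+1\rightsquigarrow j$ and $p-1\rightsquigarrow p$ (both $j,p$ range over all of $\bZ$, so this is harmless), the two displayed identities become exactly
$$\sum_{i=0}^{m}(-1)^i\binom{m}{i}\,y\,d_{p+i}=0\quad\text{on }M,\qquad y\in\{x\otimes t^{j-i},\,x\otimes t^{j-i}\xi\},$$
for all $j,p\in\bZ$ and $x\in\fg$, which is the assertion. The only real content is the binomial bookkeeping in passing from the commutator $[\Omega_{k,p}^{(m)},y]=0$ to the single-sum identity, and that is not an obstacle here since it is quoted verbatim from \cite[Lemma~4.4]{CLW}; the super case adds nothing because the bracket of $d_i$ with the odd elements $x\otimes t^j\xi$ has the same form (no sign, since $d_i$ is even) as with the even elements. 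Thus I expect the proof to be essentially a two-line verification citing Lemma~\ref{Omegaoper} and \cite{CLW}, with the mild care needed only in stating the index ranges correctly.
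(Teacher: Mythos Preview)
Your proposal is correct and follows essentially the same approach as the paper: restrict to the Witt subalgebra, invoke Lemma~\ref{Omegaoper} to annihilate $M$ by $\Omega_{k,p}^{(m)}$, then take commutators with $x\otimes t^j$ and $x\otimes t^j\xi$ and cite \cite[Lemma~4.4]{CLW} for the binomial reduction, noting that the odd case is formally identical since $d_i$ is even. If anything, you are slightly more careful than the paper in explicitly mentioning that a cuspidal $\mathfrak{W}$-module has finite length before applying Lemma~\ref{Omegaoper}.
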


\begin{lemm}\label{cuspidalcover}
Suppose $\fg$ is finite-dimensional. Let $M$ be a cuspidal module for the Lie superalgebra $\mathcal{L}$. Then its $\mathcal{A}$-cover $\widehat{M}$ is cuspidal.
\end{lemm}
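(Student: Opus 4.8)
The plan is to show that $\widehat{M}=(\fri\otimes M)/K(M)$ has uniformly bounded weight spaces. Since $M$ is cuspidal, it is in particular a cuspidal $\mathfrak{W}$-module, so by Lemma \ref{Omegaoper} there is some $m\in\bN$ with $\Omega_{k,s}^{(m)}M=0$ for all $k,s\in\bZ$, and by Lemma \ref{omega} (enlarging $m$ if necessary) the operators $\sum_{i=0}^m(-1)^i\binom{m}{i}\,y\,d_{p+i}$ also annihilate $M$ for $y\in\{x\otimes t^{j-i},\ x\otimes t^{j-i}\xi\}$, all $j,p\in\bZ$, $x\in\fg$. The first observation is that $\fri$ is a finitely generated $\mathcal{A}$-module: indeed $\mathcal{W}=\mathcal{A}\Delta$ is generated over $\mathcal{A}$ by $\{d_{-1},d_0,h_0,Q_0,G_{-1}\}$ (or a similar finite set), and $\fg\otimes\mathcal{A}$ is generated over $\mathcal{A}$ by a basis of $\fg$; so in either case ($IM=0$ or $IM\neq0$) there is a finite subset $S\subseteq\fri$ with $\fri=\mathcal{A}S$. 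Consequently every weight space $\widehat{M}_\mu$ is spanned by the images of $t^a\xi^\epsilon\omega\otimes v$ with $\omega\in S$, $\epsilon\in\{0,1\}$, $v$ ranging over a basis of a single weight space $M_\nu$, and $a$ determined up to the constraint that the total weight equals $\mu$; so a priori $\dim\widehat{M}_\mu$ is finite but not obviously bounded — the unboundedness would come from letting $a$ range freely.

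Next I would kill that freedom. The key point is that in $\widehat{M}$ the element $t^a\xi^\epsilon\omega\otimes v$ is not independent of $a$: working modulo $K(M)$, one uses the annihilating operators above to express $t^a\,(d_k\otimes v)$ — and likewise $t^a\,(h_k\otimes v)$, $t^a\,(Q_k\otimes v)$, $t^a\,(G_k\otimes v)$ and $t^a\,(x\otimes t^k)\otimes v$, $t^a(x\otimes t^k\xi)\otimes v$ — as $\mathcal{A}$-linear combinations of finitely many such tensors with the $t$-exponent confined to a window of length $\le m$ (or $m+2$). Concretely: the relations $\Omega_{k,s}^{(m)}M=0$, rewritten inside $\fri\otimes M$ using $a\cdot(\omega\otimes v)=(a\omega)\otimes v$, say that $\sum_{i=0}^m(-1)^i\binom{m}{i}\,t^{-i}(d_{k+s}\otimes d_i v)\in K(M)$ type-relations hold, which after the substitutions $d_iv\in M_{\nu+i}$ let one do a finite-depth recursion reducing arbitrary $t$-powers of each generator-tensor to a bounded set; the $\fg\otimes\mathcal{A}$ generators are handled identically by Lemma \ref{omega}. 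This is modeled on the $\mathfrak{W}$ and map-algebra arguments in \cite{BF, CLW}. Carrying this out shows that $\widehat{M}_\mu$ is spanned by a number of vectors bounded independently of $\mu$: one has finitely many choices of $\omega\in S$, finitely many choices of $\epsilon$, a window of $O(m)$ choices for the reduced $t$-exponent, and $\dim M_\nu\le N$ choices for $v$ in the relevant (finitely many) weight spaces $M_\nu$ of $M$.

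Assembling these bounds gives $\dim\widehat{M}_\mu\le C$ for a constant $C$ depending only on $|S|$, $m$, and the cuspidal bound $N$ of $M$, hence $\widehat{M}$ is cuspidal. The main obstacle is the bookkeeping in the middle step: one must check that the annihilating relations of Lemma \ref{omega}, when pushed through the isomorphism-free identity $a\cdot(\omega\otimes v)=(a\omega)\otimes v$ and combined across the different generator types $d_i,h_i,Q_i,G_i$ and $x\otimes t^i,x\otimes t^i\xi$, really do produce a \emph{finite} spanning set with a \emph{uniform} window — in particular that the odd generators $Q_i,G_i$ and the odd part $\fg_{\bar1}\otimes\mathcal{A}\xi$ do not introduce an unbounded $\xi$-degree (they cannot, since $\xi^2=0$) and that the commutators $[\Omega^{(m)}_{k,p},\,\cdot\,]$ with the super generators still close up after finitely many steps. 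Once the reduction window is established to be of bounded length, the uniform bound on $\dim\widehat{M}_\mu$ is immediate.
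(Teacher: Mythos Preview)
Your approach matches the paper's: use the $\Omega$-type identities of Lemma~\ref{omega} to manufacture elements of $K(M)$ and run a recursion confining all tensors to a window of bounded length. Two refinements the paper makes that you leave as ``bookkeeping'': (i) since $\widehat{M}$ is an $\mathcal{A}$-module with $t$ acting invertibly, multiplication by $t^i$ gives isomorphisms $\widehat{M}_\mu\cong\widehat{M}_{\mu+i}$, so one only needs a \emph{single} weight space to be finite-dimensional rather than a uniform bound; (ii) the recursion is launched by writing $u=d_0v$ on $M_{\alpha+q}$ (legitimate because $\alpha+q\neq 0$ once $|q|>m/2$, after normalizing $\alpha$), and this is precisely what turns the relation $\sum_{i}(-1)^i\binom{m}{i}(x\otimes t^{j-i})\otimes d_{i}v\in K(M)$ into a step that strictly decreases $|q|$. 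The paper also restricts here to the case $IM\neq 0$ (so $\fri=\fg\otimes\mathcal{A}$), deferring $\fri=\mathcal{W}$ to \cite{XL2}; hence the generators $h_i,Q_i,G_i$ never enter the cover argument and you need not check separate reductions for them.
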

\begin{proof}
The case of $IM=0$ is proved in \cite{XL2}. Now suppose $IM\neq0$, so $\fri=I$. Since $\widehat{M}$ is an $\mathcal{A}$-module,
it is sufficient to show that one of its weight spaces is finite-dimensional, then all other weight spaces will have the same dimension. Fix a weight
$\alpha+p,\ p\in \mathbb{Z}$ and let us prove that $\widehat{M}_{\alpha+p}=\mathrm{Span}\{(x\otimes t^{p-k})\otimes M_{\alpha+k}, (x\otimes t^{p-k}\xi) \otimes M_{\alpha+k}\mid k\in\mathbb{Z}, x\in \fg\}$ is finite-dimensional. Assume that $\alpha=0$ in the case that $\alpha+\mathbb{Z}=\mathbb{Z}$,
which means that $\alpha+p\neq 0, \forall p\in \mathbb{Z}$.

We will prove by induction on $|q|$ for $q\in \mathbb{Z}$ and for all $u\in M_{\alpha+q}$,
\begin{align*}
(x\otimes t^{p-q})\otimes u, (x\otimes t^{p-q}\xi)\otimes u
&\in \sum\limits_{|k|\le\frac{m}{2}}\Big((x\otimes t^{p-k})\otimes M_{\alpha+k}+(x\otimes t^{p-k}\xi) \otimes M_{\alpha+k}\Big)+K(M).
\end{align*}
If $|q|\leq \frac{m}{2}$, the claim holds. If
$|q|>\frac{m}{2}$, we may assume $q<-\frac{m}{2}$. The proof for $q>\frac{m}{2}$ is similar. Since
$d_0$ acts on $M_{\alpha+q}$ with a nonzero scalar , we can write $u=d_0v$ for some $v\in M_{\alpha+q}$. Then
$$(x\otimes t^{p-q})\otimes d_0v=\sum\limits_{i=0}^{m}(-1)^i\binom{m}{i}(x\otimes t^{p-q-i})\otimes d_iv-\sum\limits_{i=1}^{m}(-1)^i\binom{m}{i}(x\otimes t^{p-q-i})\otimes d_iv.$$
From Lemma \ref{omega}, there exists $m\in\bN$ such that
$\sum\limits_{i=0}^{m}(-1)^i\binom{m}{i}x\otimes t^{j-i}d_{p+i}v=0$ and $\sum\limits_{i=0}^{m}(-1)^i\binom{m}{i}x\otimes t^{j-i}\xi d_{p+i}v=0$ for all $j, p\in\bZ, x\in \fg$ and
$v\in M.$ Note that $I$ has a natural module structure over the commutative superalgebra $\mathcal{A}$
$$t^i(x\otimes t^j)=x\otimes t^{i+j},\ t^i(x\otimes t^j\xi)=x\otimes t^{i+j}\xi,\ t^i\xi(x\otimes t^j)=x\otimes t^{i+j}\xi,\ t^i\xi(x\otimes t^j\xi)=0$$
for $i,j\in\bZ$ and $x\in\fg$. Hence, we have
\begin{align*}\label{1}
&\sum\limits_{i=0}^{m}(-1)^i\binom{m}{i}(x\otimes t^{j-i})\otimes d_{p+i}v,\ \sum\limits_{i=0}^{m}(-1)^i\binom{m}{i}(x\otimes t^{j-i}\xi)\otimes d_{p+i}v \in K(M)
\end{align*}
for all $j, p\in\bZ, x\in \fg$ and $v\in M$. Therefore,
$$(x\otimes t^{p-q})\otimes d_0v\in \sum\limits_{|k|\le\frac{m}{2}}\Big((x\otimes t^{p-k})\otimes M_{\alpha+k}+(x\otimes t^{p-k}\xi) \otimes M_{\alpha+k}\Big)+K(M).$$
Similarly, we have
\begin{align*}
&(x\otimes t^{p-q}\xi)\otimes d_0v\\
=&\sum\limits_{i=0}^{m}(-1)^i\binom{m}{i}(x\otimes t^{p-q-i}\xi)\otimes d_iv-\sum\limits_{i=1}^{m}(-1)^i\binom{m}{i}(x\otimes t^{p-q-i}\xi)\otimes d_iv\\
&\in \sum\limits_{|k|\le\frac{m}{2}}\Big((x\otimes t^{p-k})\otimes M_{\alpha+k}+(x\otimes t^{p-k}\xi) \otimes M_{\alpha+k}\Big)+K(M).
\end{align*}
So the lemma follows from the fact that $\dim M_{\alpha+k}<\infty$ for any fixed $k$ and $\fg$ is finite-dimensional.
\end{proof}

The following theorem gives a classification of simple cuspidal $\mathcal{L}$-modules.
\begin{theo}\label{theorem1}
Any simple cuspidal $\mathcal{L}$-module is isomorphic to a simple quotient of a tensor module $\Gamma(\lambda, V)$ for some simple finite-dimensional $\fa_0$-module $V$ and some $\lambda\in \bC$.
\end{theo}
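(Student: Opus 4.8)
\emph{Proof proposal.}
The plan is to run the Billig--Futorny $\mathcal{A}$--cover argument of \cite{BF}, which is available here because the cover of a cuspidal $\mathcal{L}$--module is again cuspidal by Lemma \ref{cuspidalcover}. Let $M$ be a simple cuspidal $\mathcal{L}$--module and put $I=\fg\otimes\mathcal{A}$. First I would dispose of the case $IM=0$: then $M$ descends to a simple cuspidal module over $\mathcal{L}/I\cong\mathcal{W}$, and by the classification of simple cuspidal $\mathcal{W}$--modules in \cite{XL2} it is a simple quotient of a tensor $\mathcal{W}$--module attached to some $\lambda\in\bC$ and a simple finite--dimensional $\gl(1,1)$--module $V$. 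Viewing $V$ as an $\fa_0$--module on which $\fg\otimes\mathcal{A}$ acts trivially, that tensor $\mathcal{W}$--module is precisely the restriction of $\Gamma(\lambda,V)$ to $\mathcal{L}$ and $I$ acts trivially on it, so the theorem holds in this case. Hence from now on assume $IM\neq0$; since $M$ is simple this forces $IM=M$, and therefore $\fri=I$.

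Next I would form the cover $\widehat{M}=(\fri\otimes M)/K(M)$, an $\mathcal{A}\mathcal{L}$--module, together with the $\mathcal{L}$--module epimorphism $\pi\colon\widehat{M}\to M$ of \eqref{cover}; by Lemma \ref{cuspidalcover} the module $\widehat{M}$ is cuspidal. The point that needs care is that $\pi$ is only $\mathcal{L}$--linear (the module $M$ carries no $\mathcal{A}$--action), so $\Ker\pi$ need not be an $\mathcal{A}\mathcal{L}$--submodule and one cannot simply pass to $\widehat{M}/\Ker\pi$.

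To circumvent this I would invoke that cuspidal modules have finite length (a standard fact; see \cite{BF}; for $\mathcal{A}\mathcal{L}$--modules it may also be read off from $\ol{\cU}\cong\cK\otimes U(\fa_0)$ and Lemma \ref{XL2-1}). Then among the $\mathcal{A}\mathcal{L}$--submodules $N\subseteq\widehat{M}$ with $\pi(N)=M$ there is a minimal one, say $\widehat{M}_0$. For every proper $\mathcal{A}\mathcal{L}$--submodule $N\subsetneq\widehat{M}_0$ minimality gives $\pi(N)\subsetneq M$, hence $\pi(N)=0$ by simplicity of the $\mathcal{L}$--module $M$; consequently the sum $R$ of all proper $\mathcal{A}\mathcal{L}$--submodules of $\widehat{M}_0$ still satisfies $\pi(R)=0$, so $R\subsetneq\widehat{M}_0$ and $\widehat{M}_0/R$ is a \emph{simple} weight $\mathcal{A}\mathcal{L}$--module. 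Moreover $\pi$ descends to a surjective $\mathcal{L}$--module map $\widehat{M}_0/R\twoheadrightarrow M$, so $M$ is a simple $\mathcal{L}$--module quotient of $\widehat{M}_0/R$.

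Finally I would identify $\widehat{M}_0/R$: it is cuspidal (a subquotient of $\widehat{M}$), so by Lemma \ref{L7}(2) it is isomorphic to $\Gamma(\lambda,V)$ for some $\lambda\in\bC$ and some simple $\fa_0$--module $V$, and since the weight spaces of $\Gamma(\lambda,V)$ have dimension $2\dim V$, cuspidality forces $\dim V<\infty$; by Lemmas \ref{L5} and \ref{L6} such a $V$ is a simple finite--dimensional module over $\gl(1,1)\oplus\fg$. Composing the two maps, $M$ is isomorphic to a simple quotient of $\Gamma(\lambda,V)$, which is the assertion. The main obstacle is exactly the third paragraph: extracting from the cuspidal cover a \emph{simple} $\mathcal{A}\mathcal{L}$--quotient that still surjects onto $M$, which is where the finite--length property is used and where the discrepancy between the $\mathcal{L}$-- and $\mathcal{A}\mathcal{L}$--module structures along $\pi$ must be handled; everything else is an application of Lemmas \ref{cuspidalcover}, \ref{L7} and the structure of $\fa_0/\fa_1$.
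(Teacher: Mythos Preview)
Your proposal is correct and follows the same overall strategy as the paper: dispose of the case $IM=0$ via the $\mathcal{W}$--classification, otherwise use that $IM=M$, build the cuspidal $\mathcal{A}$--cover $\widehat{M}$ via Lemma~\ref{cuspidalcover}, and extract from $\widehat{M}$ a simple $\mathcal{A}\mathcal{L}$--subquotient that still surjects onto $M$, which Lemma~\ref{L7} then identifies as some $\Gamma(\lambda,V)$.

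The only noteworthy difference is in the extraction step. The paper takes a composition series $0=\widehat{M}^{(1)}\subset\cdots\subset\widehat{M}^{(s)}=\widehat{M}$ of $\mathcal{A}\mathcal{L}$--submodules, picks the least $l$ with $\pi(\widehat{M}^{(l)})\neq0$, and observes that $\pi$ factors through the simple factor $\widehat{M}^{(l)}/\widehat{M}^{(l-1)}$. You instead choose a minimal $\mathcal{A}\mathcal{L}$--submodule $\widehat{M}_0$ with $\pi(\widehat{M}_0)=M$ and show it has a unique maximal $\mathcal{A}\mathcal{L}$--submodule $R$ (because every proper submodule lies in $\Ker\pi$). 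Both arguments rest on the same finite--length property of cuspidal $\mathcal{A}\mathcal{L}$--modules; the composition--series version is marginally quicker since it avoids the separate verification that $R$ is proper, while your version makes the r\^ole of the minimality/simplicity interplay more explicit. Either way the conclusion and the inputs (Lemmas~\ref{cuspidalcover} and~\ref{L7}) are identical.
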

\begin{proof}
Let $M$ be a simple cuspidal $\mathcal{L}$-module. If $M$ is a trival module of $\mathcal{L}$, then $M$ is a simple quotient of the simple cuspidal $\mathcal{A}\mathcal{L}$-module $\mathcal{A}\otimes\bC$ with $\bC$ a trivial module for $\mathcal{L}$. Now suppose $M$ is a non-trivial simple cuspidal $\mathcal{L}$-module. If $IM=0$, then $M$ is a simple cuspidal $\mathcal{W}$-module. So $M$ is a simple quotient of a simple cuspidal $\mathcal{A}\mathcal{W}$-module by Theorem 3.11 in \cite{XL2}. Since $I$ is an ideal, any $\mathcal{A}\mathcal{W}$-module is naturally an $\mathcal{A}\mathcal{L}$-module with trivial $I$ action. If $IM\neq 0$, then $IM=M$ since $M$ is simple. So there is an epimorphism $\pi:\widehat{M}\rightarrow M$. From Lemma \ref{cuspidalcover}, $\widehat{M}$ is cuspidal. Hence $\widehat{M}$ has a composition series of $\mathcal{A}\mathcal{L}$- submodules
$$0=\widehat{M}^{(1)}\subset \widehat{M}^{(2)}\subset\cdots \subset \widehat{M}^{(s)}=\widehat{M}$$ with $\widehat{M}^{(i)}/\widehat{M}^{(i-1)}$ being simple $\mathcal{A}\mathcal{L}$-modules. Let $l$ be the minimal integer such that $\pi(\widehat{M}^{(l)})\ne 0$.  Since $M$ is simple $\mathcal{L}$ module, we have $\pi(\widehat{M}^{(l)})=M$ and $\pi(\widehat{M}^{(l-1)})=0$. This gives us an epimorphism of $\mathcal{L}$-modules from $\widehat{M}^{(l)}/\widehat{M}^{(l-1)}$ to $M$. From Lemma \ref{L7}, we have $\widehat{M}^{(l)}/\widehat{M}^{(l-1)}$ is isomorphic to a tensor module  $\Gamma(\lambda,V)$ for some simple finite-dimensional $\fa_0$-module $V$ and some $\lambda\in \bC$. This completes the proof.
\end{proof}

\begin{rema}
For Lie superalgebra $\mathfrak L=\mathfrak W\ltimes(\fg\otimes\mathcal A)$, by letting $\bar{\fg}=\fg\otimes \Lambda(1)$, we get $\mathfrak L=\mathfrak W\ltimes (\bar{\fg}\otimes \bC[t^{\pm 1}])$. This shows that $\mathfrak L$ is exactly the superalgebra studied in \cite{CLW}. Therefore, the conclusions on the cuspidal modules over $\mathfrak L$, even Harish-Chandra modules over $\mathfrak L$ can be directly obtained from \cite{CLW}.
\end{rema}

\section{Simple Harish-Chandra modules over super affine-Virasoro algebras}
In this section, we will classify all simple Harish-Chandra modules over super affine-Virasoro algebras $\widehat{\mathcal{L}}$.
Let $\{x_s\mid s=1,2,\cdots, l\}$ be a basis of Lie superalgebra $\fg=\fg_{\bar{0}}\oplus \fg_{\bar{1}}$. Then $\mbox{dim}\,\widehat{\mathcal{L}}_n=4+2l$ for $n\in\mathbb{Z}$ and $n\neq 0$.
Let $M$ be a simple Harish-Chandra module over $\widehat{\mathcal{L}}$. By Schur's Lemma, we may assume that the central element $C$ acts
on $M$ by scalar $c$.
\begin{lemm}\label{6.1}
Suppose that $M=\bigoplus_{\lambda\in \mathbb{C}}M_\lambda$ is a simple cuspidal module over $\widehat{\mathcal{L}}$. Then the action of central element $C$
on $M$ is trival.
\end{lemm}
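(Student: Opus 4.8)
The plan is to exploit the standard trick for showing central charges vanish on cuspidal (uniformly bounded) modules: if a central element acted nontrivially, one could manufacture infinitely many linearly independent vectors inside a single weight space, contradicting uniform boundedness. Concretely, I would work inside the abelian-type subalgebra where the cocycle lives, namely the Heisenberg-like piece spanned by $\{h_i \mid i\in\bZ\}$ together with $C$, whose only nonzero brackets are $[h_i,h_j]=2i\delta_{i+j,0}C$. First I would fix a weight $\mu\in\supp(M)$ and consider the finite-dimensional weight space $M_\mu$. For each $N\in\bN$ the operators $h_{-N},h_{-N-1},\dots,h_{-2N}$ map $M_{\mu}$ into the weight spaces $M_{\mu-N},\dots,M_{\mu-2N}$, and their commutators land in $C\cdot M_{\mu}$; the idea is that if $c\neq0$ these raising/lowering operators satisfy nondegenerate Heisenberg relations and so, on any nonzero vector, generate a subspace whose dimension grows with $N$, which will be incompatible with $\dim M_{\mu+k}\le \mathcal N$ for all $k$.

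The key steps, in order, would be: (1) Record that $C$ acts as the scalar $c$ by Schur's Lemma (already assumed) and that the subalgebra $\mathcal H=\operatorname{span}_\bC\{h_i\mid i\in\bZ\}\oplus\bC C$ is a Heisenberg Lie algebra with $[h_i,h_j]=2i\delta_{i+j,0}C$. (2) Suppose for contradiction $c\neq 0$. (3) Restrict $M$ to $\mathcal H$; since $M$ is cuspidal over $\widehat{\mathcal L}$ it is in particular a uniformly bounded weight module over $\mathcal H$ (weights still indexed by the $d_0$-eigenvalue, which decomposes $\mathcal H$-modules since $[d_0,h_i]=ih_i$). (4) Invoke the classical fact that a Heisenberg algebra with nonzero central charge has no nontrivial uniformly bounded weight module: pick $0\neq v\in M_\mu$ and, using that $[h_i,h_{-i}]=2ic\,\mathrm{id}$ with $2ic\neq0$, show by the usual Fock-space argument (the $h_{-i}$ for $i>0$ act as independent "creation" operators modulo lower weight spaces) that the vectors obtained by applying distinct monomials $h_{-i_1}\cdots h_{-i_r}$ to $v$ are linearly independent, forcing $\sum_k\dim M_{\mu-k}=\infty$ and hence some weight space of unbounded dimension — the contradiction. (5) Conclude $c=0$, i.e. $C$ acts trivially on $M$.

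The main obstacle I anticipate is step (4): making the Fock-space linear-independence argument airtight in the present graded setting, since $M$ need not be a lowest-weight $\mathcal H$-module and the $h_{-i}$'s do not act freely on all of $M$ but only modulo contributions in strictly lower weight spaces. The clean way around this is to localize: pass to the weight $\mu_0\in\supp(M)$ realizing the maximal weight-space dimension among $\{\mu-k\}$ (which exists by uniform boundedness), derive that on that "top" space the creation operators $h_{-i}$ ($i>0$) together with the relations $h_i h_{-i}=h_{-i}h_i+2ic$ already force a strict dimension increase — e.g. $h_{-1}$ must be injective on $M_{\mu_0}$ because its "left inverse up to scalar" $\tfrac{1}{2c}h_1$ exists, yet $h_{-1}M_{\mu_0}\subseteq M_{\mu_0-1}$ and $\dim M_{\mu_0-1}\le\dim M_{\mu_0}$ forces $h_1$ to be injective on $M_{\mu_0-1}$ too, and iterating along a chain while tracking the commutators produces more independent vectors than the uniform bound allows. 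Once this Heisenberg input is in place the rest is bookkeeping. One should also double-check that no other cocycle term (the $\alpha(d_i,h_j)$ and $\alpha(G_i,G_j)$ pieces) interferes; but those only reconfirm that $C$ is genuinely central and can be ignored for the contradiction, so they pose no real difficulty.
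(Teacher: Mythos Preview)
Your overall strategy---restrict to a subalgebra where the cocycle is visible and invoke a ``no cuspidal modules with nonzero centre'' result---is sound, but it differs from the paper's proof and your execution of step~(4) has a real gap.

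\textbf{Comparison with the paper.} The paper does \emph{not} use the Heisenberg piece $\{h_i,C\}$. Instead it sets $\bar d_i = d_i + \tfrac{i}{2}h_i$ and checks that
\[
[\bar d_i,\bar d_j]=(j-i)\bar d_{i+j}+\tfrac{1}{2}j^3\delta_{i+j,0}C,
\]
so $\mathfrak D=\operatorname{span}\{\bar d_i,C\}$ is a copy of the Virasoro algebra inside $\widehat{\mathcal L}$ with central charge proportional to~$c$. A cuspidal $\widehat{\mathcal L}$-module is in particular cuspidal over $\mathfrak D$, and Kaplansky--Santharoubane forces the Virasoro central charge to vanish, hence $c=0$. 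This is a one-line citation once $\bar d_i$ is written down. Your Heisenberg route needs the analogous fact that the infinite Heisenberg algebra with nonzero centre admits no cuspidal weight modules; this is also standard, but you are trying to prove it from scratch.

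\textbf{The gap.} Your claimed injectivity of $h_{-1}$ on the maximal weight space $M_{\mu_0}$ is not justified. From $[h_1,h_{-1}]=2c$ you get $\tfrac{1}{2c}h_1h_{-1}=\mathrm{id}+\tfrac{1}{2c}h_{-1}h_1$, which is \emph{not} the identity; $h_1$ is not a left inverse of $h_{-1}$ up to scalar. All the relation gives is $\ker h_{-1}\cap\ker h_1=0$ on each weight space, which does not produce the dimension growth you need. More generally, for any single pair $(h_n,h_{-n})$ cuspidal modules with $c\neq0$ \emph{do} exist (e.g.\ a direct sum of a highest- and a lowest-weight Fock module for the $3$-dimensional Heisenberg), so the contradiction genuinely requires the infinite family together, and the ``monomials in $h_{-i}$ are independent'' argument only works once you have produced a vector annihilated by all $h_n$ with $n>0$ (or all $n<0$). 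Producing such a vector from mere uniform boundedness is exactly the nontrivial step, and your sketch does not supply it.

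\textbf{Fix.} Either (i) cite the known classification of Harish-Chandra modules over the infinite Heisenberg algebra (e.g.\ Futorny's results) to conclude that $c\neq0$ forces unbounded weight multiplicities, or (ii) adopt the paper's shortcut: form the Virasoro subalgebra $\bar d_i=d_i+\tfrac{i}{2}h_i$ and invoke \cite{KS} directly. Option~(ii) is shorter and avoids the delicate Heisenberg argument entirely.
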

\begin{proof}
Let $\bar{d}_i=d_i+\frac{1}{2}if_i$ for $i\in \mathbb{Z}$. Then
$$[\bar{d}_i, \bar{d}_j]=(j-i)\bar{d}_{i+j}+\frac{1}{2}j^3\delta_{i+j,0}C.$$
So $\fD={\rm span}\{\bar{d}_i, C\mid i\in \mathbb{Z}\}$, which is isomorphic to the Virasoro algebra,  is a subalgebra of $\widehat{\mathcal{L}}$.  Note that $M$ is a cuspidal
module over the Virasoro algebra $\fD$. By \cite{KS}, we have $c=0$.
\end{proof}
From Lemma \ref{6.1}, we know that the category of simple cuspidal $\widehat{\mathcal{L}}$-modules is naturally equivalent to the category of
simple cuspidal $\mathcal{L}$-modules. Thus, it remains to classify all simple Harish-Chandra modules over $\widehat{\mathcal{L}}$ which is not cuspidal.
The following result is well known.
\begin{lemm}\label{6.2}
Let $V$ be a weight module with finite-dimensional weight spaces for the Witt algebra $\mathfrak{W}$ with supp$(V)\subseteq \lambda+\mathbb{Z}$.
If for any $v\in V$, there exists $N(v)\in \mathbb{N}$ such that $d_iv=0, \forall i\geq N(v)$, then supp$(V)$ is upper bounded.
\end{lemm}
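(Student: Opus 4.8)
The statement to prove is Lemma~\ref{6.2}: if $V$ is a weight module for the Witt algebra $\mathfrak{W}$ with finite-dimensional weight spaces, $\supp(V)\subseteq\lambda+\bZ$, and every vector is annihilated by all sufficiently high $d_i$, then $\supp(V)$ is bounded above.

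The plan is to argue by contradiction: suppose $\supp(V)$ is not bounded above, so there are weights $\lambda+n$ in the support for arbitrarily large $n\in\bZ$. First I would fix a nonzero weight vector $v\in V_{\lambda+n}$ and use the hypothesis to pick $N=N(v)$ with $d_iv=0$ for all $i\ge N$. The key local relation is $[d_i,d_j]=(j-i)d_{i+j}$, so for $i\ge N$ and any $j$ we get $d_id_jv=d_jd_iv+(i-j)d_{i+j}v=(i-j)d_{i+j}v$; hence if $i+j\ge N$ as well, then $d_id_jv$ lies in $V_{\lambda+n+i+j}$ which is hit by $d_j$ applied to $V_{\lambda+n+i}$, and by choosing indices carefully one propagates the vanishing. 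The aim is to show that once the support is unbounded above, one can produce an infinite strictly increasing chain of weights whose spaces are forced to be infinite-dimensional, or else a finite weight space gets infinitely many linearly independent vectors — contradicting finite-dimensionality of some $V_\mu$.

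More concretely, I would localize in a fixed weight space. Pick a weight $\mu=\lambda+k$ that lies in the support, and consider all vectors in $V_\mu$ of the form $d_{-m}w$ where $w\in V_{\mu+m}$ and $\mu+m$ ranges over the (infinitely many) high weights in the support. Using the commutator relations and the annihilation hypothesis applied to the $w$'s, I would show that the map $d_{-m}\colon V_{\mu+m}\to V_\mu$ is injective (or has controlled kernel) for $m$ large, because if $d_{-m}w=0$ then applying $d_j$ for suitable $j$ and using $[d_j,d_{-m}]=(-m-j)d_{j-m}$ together with $d_{j-m}w$-vanishing forces $w=0$. This would cram infinitely many independent vectors into the finite-dimensional space $V_\mu$, the desired contradiction. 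Alternatively, one invokes that a cuspidal-type argument: the span of $V_{\mu}, V_{\mu+1},\dots$ together with the action of $\{d_{-1}\}$ and high $d_i$'s generates a submodule on which the Witt algebra acts in a way incompatible with the local finiteness unless the top is bounded.

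The main obstacle I anticipate is making the injectivity/propagation step uniform: the bound $N(v)$ depends on $v$, so when I move around inside an infinite family of weight spaces I need to control these bounds, and a single vector $v$ may have $d_iv=0$ for $i\ge N(v)$ but $d_{N(v)-1}v\ne 0$, so the ``stairs'' of vanishing are ragged. The trick will be to work one weight space at a time: fix $w$ in a high weight space, let $N=N(w)$, and then run the commutator identity $d_{i}d_{j}w=(i-j)d_{i+j}w$ for the finitely many relevant $i,j$ below $N$ to pin down the image of $w$ under the ``descent'' operators $d_{-m}$, only needing finitely much information per $w$. Combining this with $\dim V_\mu<\infty$ for the single fixed $\mu$ closes the argument without needing uniform control across all weights simultaneously. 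I expect the write-up to reduce to: choose $\mu$ in the support, show $\bigoplus_{m\gg 0}d_{-m}V_{\mu+m}\subseteq V_\mu$ uses up infinite dimension, contradiction.
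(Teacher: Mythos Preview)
The paper gives no proof of this lemma; it is introduced with ``The following result is well known'' and left at that, so your sketch has to stand on its own.

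Your injectivity step is correct and in fact cleaner than you suggest: if $d_{-\ell}w=0$ for some $0\ne w\in V_\nu$ with $\nu\ne 0$, and $d_jw=0$ for $j\ge N$, then $0=d_jd_{-\ell}w=d_{-\ell}d_jw+(-\ell-j)d_{j-\ell}w=(-\ell-j)d_{j-\ell}w$ for each $j\ge N$ gives $d_kw=0$ for $k\ge N-\ell$; iterating with the same $\ell$ drives the bound below $0$ and forces $d_0w=0$, a contradiction. So $d_{-\ell}:V_\nu\to V_{\nu-\ell}$ is injective for every $\ell>0$, with no uniform control of $N(v)$ needed. The genuine gap is what you do with this. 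Injectivity of each $d_{-m}:V_{\mu+m}\to V_\mu$ only yields $\dim V_{\mu+m}\le\dim V_\mu$; it does \emph{not} put infinitely many independent vectors into $V_\mu$, because the images $d_{-m}V_{\mu+m}$ for different $m$ are subspaces of one fixed finite-dimensional space and may all coincide. Your final line ``$\bigoplus_{m\gg0}d_{-m}V_{\mu+m}\subseteq V_\mu$ uses up infinite dimension'' is precisely where the argument fails: that sum is not direct and is bounded by $\dim V_\mu$.

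What injectivity \emph{does} buy you is that $\dim V_{\lambda+n}$ is non-increasing in $n$, hence eventually constant equal to some $d\ge 1$ for $n\ge n_0$ (assuming unbounded support), so that $d_{-1}:V_{\lambda+n+1}\to V_{\lambda+n}$ is a bijection for $n\ge n_0$. Let $N_n$ be the least integer with $d_iV_{\lambda+n}=0$ for all $i\ge N_n$; since $d_0$ acts by the nonzero scalar $\lambda+n$, one has $N_n\ge 1$. For $w\in V_{\lambda+n+1}$ and $i\ge N_n$ the relation $0=d_i(d_{-1}w)=d_{-1}d_iw-(i+1)d_{i-1}w$, combined with downward induction from $i\gg 0$ (where $d_iw=0$), gives $d_jw=0$ for all $j\ge N_n-1$, hence $N_{n+1}\le N_n-1$. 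This infinite strictly decreasing sequence of positive integers is the actual contradiction.
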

\begin{lemm}\label{6.3}
Suppose $M$ is a simple Harish-Chandra module over $\widehat{\mathcal{L}}$ which is not cuspidal, then $M$ is a highest (or lowest) weight module.
\end{lemm}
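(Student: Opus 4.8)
The plan is to show that one of the two subspaces
$$N^{+}=\{\,v\in M\mid \widehat{\mathcal{L}}_{n}v=0\ \text{for all sufficiently large }n\,\},\qquad N^{-}=\{\,v\in M\mid \widehat{\mathcal{L}}_{-n}v=0\ \text{for all sufficiently large }n\,\}$$
is nonzero. A routine check with the $\bZ$-grading of $\widehat{\mathcal{L}}$ (if $\widehat{\mathcal{L}}_{n}v=0$ for $n\geq n_{0}$ and $y\in\widehat{\mathcal{L}}_{k}$, then $\widehat{\mathcal{L}}_{n}(yv)\subseteq\widehat{\mathcal{L}}_{n+k}v+y\widehat{\mathcal{L}}_{n}v=0$ for $n$ large) shows that each of $N^{+}$, $N^{-}$ is a $\widehat{\mathcal{L}}$-submodule of $M$; hence by simplicity whichever is nonzero equals $M$, and I will check that this forces $M$ to be a highest (resp.\ lowest) weight module.

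First suppose $N^{+}=M$ (the case $N^{-}=M$ is symmetric). Since $d_{n}\in\widehat{\mathcal{L}}_{n}$, every vector of $M$ is killed by $d_{n}$ for $n$ large, so Lemma \ref{6.2}, applied to $M$ as a $\mathfrak{W}$-module (recall $\supp M\subseteq\lambda+\bZ$), shows $\supp M$ is bounded above; let $\mu$ be its maximal element. Then $\widehat{\mathcal{L}}_{+}M_{\mu}=0$ and $M_{\mu}$ is a nonzero finite-dimensional module for $\widehat{\mathcal{L}}_{0}=\fg\oplus\bC C+\bC d_{0}+\bC h_{0}+\bC Q_{0}+\bC G_{0}$. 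Pick a simple $\fg$-submodule $W\subseteq M_{\mu}$ and a highest weight vector $v_{0}$ of $W$ (for the triangular decomposition of $\fg$), so $\fg_{+}v_{0}=0$ and $\fh$ acts on $v_{0}$ by a weight. Since $h_{0}$ commutes with $\fg$, it acts on $W$ as a scalar by Schur's Lemma; $C$ is central and $d_{0}$ acts as $\mu$ on all of $M_{\mu}$; hence $v_{0}$ is a common eigenvector for $\widehat{\fh}=\fh\oplus\bC C+\bC d_{0}+\bC h_{0}$ with $(\widehat{\mathcal{L}}_{+}\oplus\fg_{+})v_{0}=0$, i.e.\ $v_{0}$ is a highest weight vector of $\widehat{\mathcal{L}}$. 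By simplicity $M=U(\widehat{\mathcal{L}})v_{0}$, so $M$ is a highest weight module.

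It remains to prove $N^{+}\neq0$ or $N^{-}\neq0$, which is the main obstacle and is where I expect essentially all the work to lie. Since $M$ is not cuspidal, $\sup_{\mu}\dim M_{\mu}=\infty$, so $M$ is a non-uniformly-bounded weight module with finite-dimensional weight spaces over the Witt algebra $\mathfrak{W}=\mathrm{span}_{\bC}\{d_{i}\}$; invoking the structure of weight modules over the Witt (equivalently centreless Virasoro) algebra (Mathieu), $M$ should then contain a nonzero weight vector $v$ annihilated by $d_{i}$ for all $i\gg0$, or by $d_{i}$ for all $i\ll0$ — say the former. The delicate step is to \emph{propagate} $v$ to a nonzero vector of $N^{+}$: starting from a vector killed by the $d_{i}$ with $i\gg0$, one must produce one that is in addition killed by all of $h_{i},Q_{i},G_{i},x\otimes t^{i}$ and $x\otimes t^{i}\xi$ with $i\gg0$. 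I would carry this out by induction on weights, controlling the spaces $\{w\in M\mid\widehat{\mathcal{L}}_{n}w=0\ \forall n\geq N\}$ as $N$ grows, and exploiting the bracket relations of Theorem \ref{2-cocycle} together with $\dim M_{\mu}<\infty$ — the mechanism being close to that in the proof of Lemma \ref{cuspidalcover}, where finite-dimensionality of the weight spaces combined with the relevant operator identities is precisely what bounds such spaces. Once $N^{+}\neq0$ (hence $=M$), the first two paragraphs conclude; the case $i\ll0$ is symmetric and gives a lowest weight module.
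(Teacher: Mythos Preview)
Your overall framework coincides with the paper's: the paper also considers $M'=\{v\in M\mid \dim\,\widehat{\mathcal L}_{+}v<\infty\}$ (your $N^{+}$), observes it is an $\widehat{\mathcal L}$-submodule, and once $M'=M$ concludes via Lemma~\ref{6.2}. Your final paragraph, extracting an actual highest weight vector inside the top weight space, is in fact more careful than the paper, which simply declares $M$ a highest weight module once $\supp M$ is bounded above.

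Where your proposal has a genuine gap is the crucial step $N^{+}\neq0$. You invoke Mathieu to find $v$ with $d_{i}v=0$ for $i\gg0$ and then promise a ``propagation'' to all of $\widehat{\mathcal L}_{i}$. Two concrete problems: (i) Mathieu's classification applies to \emph{simple} Virasoro modules, and $M$ is simple over $\widehat{\mathcal L}$ but has no reason to be simple, or even of finite length, over $\mathfrak W$; (ii) the propagation is not a formality---the set $\{v:d_{i}v=0,\ i\gg0\}$ is a $\mathfrak W$-submodule but \emph{not} an $\widehat{\mathcal L}$-submodule (for instance $d_{i}(h_{j}v)=jh_{i+j}v+h_{j}d_{i}v$, and nothing forces $h_{i+j}v=0$), so you cannot simply enlarge by simplicity. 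Your sketch gives no mechanism for overcoming this.

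The paper sidesteps both issues with a one-line pigeonhole. Since dimensions of weight spaces are unbounded, choose $k\in\bN$ with $\dim M_{\lambda-k}>(4+2l)\dim M_{\lambda}+4\dim M_{\lambda+1}$. The map from $M_{\lambda-k}$ to $M_{\lambda}^{\,4+2l}\oplus M_{\lambda+1}^{\,4}$ given by applying the full basis of $\widehat{\mathcal L}_{k}$ together with $d_{k+1},h_{k+1},Q_{k+1},G_{k+1}$ then has a nonzero kernel, so there is $\omega\neq0$ annihilated by all of $\widehat{\mathcal L}_{k}$ and all of $\mathcal W_{k+1}$. Since $[\mathcal W_{i},\mathcal W_{j}]=\mathcal W_{i+j}$ for positive $i,j$, and $k,k+1$ generate as an additive semigroup every integer $\geq k^{2}$, one gets $\mathcal W_{p}\omega=0$ for $p\geq k^{2}$; then $[d_{p},x\otimes t^{k}]=k\,x\otimes t^{p+k}$ (and the analogous identity with $\xi$) gives $(\fg\otimes\mathcal A)_{q}\omega=0$ for $q\geq k^{2}$. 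Hence $\omega\in N^{+}$ directly, with no appeal to Mathieu and no propagation step. This elementary counting is the missing idea in your argument.
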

\begin{proof}
For a fixed $\lambda\in\mbox{supp}(M)$, there is a $k\in\mathbb{Z}$ such that $\mbox{dim}M_{\lambda-k}>(4+2l)\mbox{dim}M_\lambda+4\mbox{dim}M_{\lambda+1}$
since $M$ is not cuspidal. Without loss of generality, we may assume that $k\in \mathbb{N}$. Then there exists a nonzero element $\omega\in M_{\lambda-k}$
such that
$$d_k\omega=d_{k+1}\omega=h_k\omega=h_{k+1}\omega=Q_k\omega=Q_{k+1}\omega=G_k\omega=G_{k+1}\omega=0$$
and
$$x_s\otimes t^k\omega=x_s\otimes t^k\xi\omega=0,$$
where $s\in\{1,2,\cdots,l\}$. Therefore, we get
$d_p\omega=h_p\omega=Q_p\omega=G_p\omega=x_s\otimes t^p\omega=x_s\otimes t^p\xi\omega=0$ for all $p\geq k^2$
since $[\widehat{\mathcal{L}}_i, \widehat{\mathcal{L}}_j]=\widehat{\mathcal{L}}_{i+j}$ for $j\neq 0$.

It is easy to see that $M'=\{v\in M\mid\mbox{dim}\ \widehat{\mathcal{L}}_+v<\infty\}$ is a nonzero submodule of $M$. So $M=M'$ by the simplicity of $M$.
Since $M$ is also the $d_0$-weight module over the Witt algebra $\mathfrak{W}$, we have $\mbox{supp}(M)$ is upper bounded by Lemma \ref{6.2}, that is $M$ is a highest weight module.
\end{proof}
Combining with Lemma \ref{L7}, Theorem \ref{theorem1} and Lemma \ref{6.3}, we have the following result.
\begin{theo}\label{Theorem 3}
Let $M$ be a simple Harish-Chandra module over $\widehat{\mathcal{L}}$. Then $M$ is a highest weight module, a lowest weight module, or
a simple quotient of a tensor module $\Gamma(\lambda, V)$ for some simple finite-dimensional $\fa_0$-module $V$ and some $\lambda\in \bC$.
\end{theo}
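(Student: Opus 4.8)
The plan is to assemble Theorem~\ref{Theorem 3} from the three structural results already established, doing a careful case split according to whether $M$ is cuspidal and, if not, whether the central charge $c$ can be absorbed. Let $M$ be a simple Harish-Chandra module over $\widehat{\mathcal L}$. By Schur's Lemma $C$ acts on $M$ by a scalar $c$. First I would dispose of the cuspidal case: if $M$ is cuspidal, then Lemma~\ref{6.1} forces $c=0$, so $M$ descends to a simple cuspidal module over the centerless algebra $\mathcal L=\widehat{\mathcal L}/\mathbb C C$. Then Theorem~\ref{theorem1} applies verbatim and realizes $M$ as a simple quotient of a tensor module $\Gamma(\lambda,V)$ for some simple finite-dimensional $\fa_0$-module $V$ and some $\lambda\in\bC$, which is exactly the third alternative in the statement. (In the degenerate subcase $IM=0$ the module $M$ is a simple cuspidal $\mathcal W$-module, and the argument of Theorem~\ref{theorem1} still produces a tensor module $\Gamma(\lambda,V)$ with trivial $I$-action, so no separate treatment is needed here.)

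Next I would handle the non-cuspidal case. By Lemma~\ref{6.3}, any simple Harish-Chandra $\widehat{\mathcal L}$-module that is \emph{not} cuspidal is a highest weight module or a lowest weight module, i.e.\ it falls into the first two alternatives. Note that Lemma~\ref{6.3} is proved directly for $\widehat{\mathcal L}$ (it uses the bracket relations $[\widehat{\mathcal L}_i,\widehat{\mathcal L}_j]=\widehat{\mathcal L}_{i+j}$ for $j\neq 0$, together with Lemma~\ref{6.2} applied to the restriction of $M$ to the Witt subalgebra $\mathfrak W$), so no reduction modulo the center is required in this branch and the central charge $c$ is allowed to be arbitrary. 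Combining the two cases exhausts all possibilities and yields the three alternatives claimed.

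Finally I would observe that the three cases are collectively exhaustive simply because ``cuspidal'' and ``not cuspidal'' partition all weight modules with finite-dimensional weight spaces; there is no overlap issue to worry about since the statement is a disjunction, not a classification up to isomorphism with disjoint classes. The main obstacle, such as it is, is bookkeeping: making sure that the passage from $\widehat{\mathcal L}$ to $\mathcal L$ in the cuspidal case is legitimate (it is, precisely because Lemma~\ref{6.1} kills $c$), and that the trivial-module and $IM=0$ sub-possibilities in Theorem~\ref{theorem1} are correctly subsumed under the $\Gamma(\lambda,V)$ alternative. Since all the heavy lifting — the $\mathcal A$-cover machinery, cuspidality of the cover (Lemma~\ref{cuspidalcover}), the classification of simple $\mathcal A\mathcal L$-modules (Lemma~\ref{L7}), and the upper-boundedness argument (Lemmas~\ref{6.2}, \ref{6.3}) — has already been done, the proof of Theorem~\ref{Theorem 3} itself is a short synthesis with no genuinely hard step remaining.
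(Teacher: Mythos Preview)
Your proposal is correct and follows essentially the same approach as the paper: the paper's proof consists of the single sentence ``Combining with Lemma~\ref{L7}, Theorem~\ref{theorem1} and Lemma~\ref{6.3}, we have the following result,'' relying on the remark after Lemma~\ref{6.1} that simple cuspidal $\widehat{\mathcal L}$-modules are the same as simple cuspidal $\mathcal L$-modules. Your write-up is simply a more explicit unpacking of that synthesis, with the same case split and the same ingredients.
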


Let $\fg$ be a finite-dimensional basic classical simple Lie superalgebra. From Remark 3.2, we see that $\dim\, H^2(\mathfrak L, \bC)=2$.
By the similar proof as Lemma 3.2 in \cite{LPX1} and the conclusions in \cite{CLW}, we can get a similar result for the Lie superalgebra $\widehat{\mathfrak{L}}$.

\begin{rema}
Let $\fg$ be a finite-dimensional basic classical simple Lie superalgebra, and $M$ be a simple Harish-Chandra module over $\widehat{\mathfrak{L}}$. Then $M$ is a highest weight module, a lowest weight module, or a simple quotient of a tensor module $\Gamma(\lambda, V)$ for some simple finite-dimensional $\frak b_0$-module $V$ and some $\lambda\in \bC$,
where $\frak b_0=(t-1)\mathfrak W\ltimes (\bar{\fg}\otimes \bC[t^{\pm 1}])$.
\end{rema}
\noindent {\bf Acknowledgement} The authors would like to thank the professor R. L\"{u} for his help in preparation of this paper. This work was supported by
NSF of China (Grant 12101082, 12071405, 11971315, 11871429, 11871052),  NSF for Youths of Jiangsu Province (Grant BK20201051) and
Jiangsu Provincial Double-Innovation Doctor Program (Grant JSSCBS20210742).

\section*{References}


\begin{thebibliography}{00}
\bibitem{B} Y. Billig, Jet modules, Canad. J. Math., 59 (2007), no. 4, 721-729.

\bibitem{BF} Y. Billig, V. Futorny, Classification of irreducible representations of Lie algebra of vector fields on a torus,  J. Reine Angew. Math., 720 (2016), 199-216.

\bibitem{BFI} Y. Billig, V. Futorny, K. Iohara, Classification of simple strong Harish-Chandra $W(m,n)$-modules,  arXiv: 2006.05618.

\bibitem{CL} Y. Cai, R. L\"{u}, Classification of simple Harish-Chandra modules over the Neveu-Schwarz algebra and its contact subalgebra, arXiv: 2010.00322.

\bibitem{CLL} Y. Cai, D. Liu, R. L\"{u}, Classification of simple Harish-Chandra modules over the $N=1$ Ramond algebra, J. Algebra, 567 (2021), 114-127.

\bibitem{CLW} Y. Cai, R. L\"{u}, Y. Wang, Classification of simple Harish-Chandra modules for map (super)algebras related to the Virasoro algebra, J. Algebra, 570 (2021), 397-415.

\bibitem{CY} Q. Chen, Y. Yao, Irreducible tensor product modules over the affine-Virasoro algebra of type $A_1$, arXiv:2102.00979.

\bibitem{E} S. Eswara Rao, Partial classification of modules for Lie algebra of diffeomorphisms of d-dimensional torus, J. Math. Phys., 45 (2004), 3322-3333.

\bibitem{EJ} S. Eswara Rao, C. Jiang, Classification of irreducible integrable representations for the full toroidal Lie algebras, J. Pure Appl. Algebra, 200 (2005), 71-85.

\bibitem{IK} K. Iohara, Y. Koga, Central extension of Lie superalgebras, Comment. Math. Helv., 76 (2001), 110-154.

\bibitem{GHL} Y. Gao, N. Hu, D. Liu, Representations of the affine-Virasoro algebra of type $A_1$, J. Geom. Phys., 106(2016), 102-107.

\bibitem{K} V. G. Kac, Infinite-dimensional Lie Algebras, 3rd ed., Cambridge Univ. Press, Cambridge, U.K., 1990.

\bibitem{KS} I. Kaplansky, L. J. Santharoubane, Harish-Chandra modules over the Virasoro algebra, Infinite-dimensional groups with applications (Berkeley, Calif. 1984),
217-231, Math. Sci. Res. Inst. Publ., 4, Springer, New York, 1985.

\bibitem{KT} V. G. Kac, I. T. Todorov, Superconformal current algebras and their unitary representations, Comm. Math. Phys., 102 (1985), 337-347.

\bibitem{Kv} V. G. Kac, J. W. van de Leur, On classification of superconformal algebras, Strings, Vol. 88, World Scientific, Singapore, 1988.





\bibitem{LPX1} D. Liu, Y. Pei, L. Xia, Classification of quasi-finite Irreducible modules over affine-Virasoro algebras, J. Lie Theory, 31 (2021), no. 2, 575-582.

\bibitem{LPX2} D. Liu, Y. Pei, L. Xia, Classification of simple weight modules for the $N=2$ superconformal algebra, arXiv: 1904.08578.

\bibitem{Mo} T. Moons, on the weight spaces of Lie superalgebra modules, J. Algebra, 147 (1992), no. 2, 283-323.

\bibitem{NY} K-H. Neeb, M. Yousofzadeh,  Current superalgebras and unitary representations, J. Pure Appl. Algebra, 222 (2018), 3303-3333.

\bibitem{NY1} K-H. Neeb, M. Yousofzadeh,  Universal central extensions of current Lie superalgebras, J. Pure Appl. Algebra, 224 (2020),  no.4, 106205, 10pp.


\bibitem{S} Y. Su, Classification of Harish-Chandra modules over the super-Virasoro algebras. Commun. Algebra 23 (1995), no. 10, 3653-3675.

\bibitem{XL2} Y. Xue, R. L\"{u}, Simple weight modules with finite-dimensional weight spaces over Witt superalgebras, J. Algebra 574 (2021), 92-116.

\end{thebibliography}
\end{document}